\newtheorem{theorem}{Theorem}
\newtheorem{corollary}[theorem]{Corollary}
\newtheorem{definition}[theorem]{Definition}
\newtheorem{proposition}[theorem]{Proposition}
\theoremstyle{remark}
\newtheorem{remark}[theorem]{Remark}
\newtheorem{example}[theorem]{Example}
 \newcommand{\M}{\mathsf{M}}
 \renewcommand{\phi}{\varphi}
\newcommand{\E}{\mathbb{E}}
\renewcommand{\P}{\mathbb{P}}
\newcommand{\N}{\mathbb{N}}
\newcommand{\R}{\mathbb{R}}
\newcommand{\be}{\begin{equation}}
\newcommand{\ee}{\end{equation}}
\DeclareMathOperator{\supp}{supp}
\newcommand{\bea}{\begin{eqnarray}}
\newcommand{\bes}{\begin{subequations}}
\newcommand{\ees}{\end{subequations}}
\newcommand{\bgt}{\begin{gather}}
\newcommand{\egt}{\begin{gather}}
\newcommand{\eea}{\end{eqnarray}}
\newcommand{\beaa}{\begin{eqnarray*}}
\newcommand{\eeaa}{\end{eqnarray*}}
\renewcommand{\epsilon}{\varepsilon}
\newcommand{\fourIdx}[5]{%
\setbox1=\hbox{\ensuremath{^{#1}}}%
 \setbox2=\hbox{\ensuremath{_{#2}}}%
 \setbox5=\hbox{\ensuremath{#5}}%
 \hspace{\ifnum\wd1>\wd2\wd1\else\wd2\fi}%
 \ensuremath{\copy5^{\hspace{-\wd1}\hspace{-\wd5}#1\hspace{\wd5}#3}%
 _{\hspace{-\wd2}\hspace{-\wd5}#2\hspace{\wd5}#4}%
 }}
\numberwithin{equation}{section}
\numberwithin{theorem}{section}
\renewcommand{\subset}{\subseteq}
\renewcommand{\M}{\rm{MT}}
\newcommand\blue{\color{black}}
\begin{document}

\title{Dual attainment for the martingale transport problem}
\thanks{The project has been generously supported by the European Research Council under the European Union's Seventh Framework Programme (FP7/2007-2013) / ERC grant agreement no. 335421. The first and the second author gratefully acknowledge support from the Austrian Science Foundation (FWF) through grant Y782 and the third author also acknowledge the support of St John's College Oxford.
}

\author{Mathias Beiglb\"ock}%\thanks{mathias.beiglboeck@tuwien.ac.at}
\author{Tongseok Lim}%\thanks{E-mail: tongseok.lim@maths.ox.ac.uk}
\author{Jan Ob\l \'oj}%\thanks{E-mail: jan.obloj@maths.ox.ac.uk}

%\thanks{...\copyright 2016 by the authors.}

\begin{abstract} We investigate existence of dual optimizers in one-dimensional martingale optimal transport problems. While \cite{BeNuTo16} established such existence for weak (quasi-sure) duality, \cite{BeHePe12} showed existence for the natural stronger (pointwise) duality may fail even in regular cases. We establish that (pointwise) dual maximizers exist when $y\mapsto c(x,y)$ is convex, or equivalent to a convex function. 
It follows that when marginals are compactly supported, the existence holds when the cost $c(x,y)$ is twice continuously differentiable in $y$. Further, this may not be improved as we give examples with $c(x,\cdot)\in C^{2-\epsilon}$, $\epsilon >0$, where dual attainment fails. Finally, when measures are compactly supported, we show that dual optimizers are Lipschitz if $c$ is Lipschitz. 
\end{abstract}
\maketitle
\noindent\emph{Keywords:} martingale optimal transport, Kantorovitch duality, dual attainment, robust mathematical finance.

%\noindent\emph{MSC2010:} 60G42, 49N05

\section{Introduction} 

In recent years, there has been a significant interest in optimal transport problems where the transport plan is constrained to be a martingale. Referred to as martingale optimal transport (MOT), they were introduced by 
\cite{BeHePe12,GaHeTo13} to study the mathematical finance question of computing model--independent no--arbitrage price bounds, see \cite{Ho11} for a survey, and have been studied in many papers since, e.g.\ \cite{HoNe12,DoSo12,CaLaMa14, NuStTa17}. They are however of much wider mathematical interest. Mirroring classical optimal transport, they have important consequences for the study of martingale inequalities, see e.g.\ \cite{BeNu14,HeObSpTo12,ObSp14}. In continuous time, they are intimately linked with the Skorokhod embedding problem, see \cite{Ob04} for an overview of the latter, and have already led to new contributions to this well established field, see \cite{BeCoHu14}. 

Most papers on MOT either study the structure and geometry of optimisers or investigate a form of general Kantorovitch duality. Duality is of particular importance for mathematical finance: the primal problem corresponds to option pricing while the dual offers robust hedging strategies. However the latter poses a challenge: as already shown in \cite{BeHePe12}, the dual problem in MOT does not admit an optimiser in general. One way to recover the dual attainment is relaxing the duality and considering not pointwise but weaker, quasi--sure, inequalities, as shown in \cite{BeNuTo16}. 
Our aim here instead is to identify sufficient conditions on the problem under which a suitably nice dual optimiser exists. This has immediate applications in robust mathematical finance, where pointwise inequalities are more natural. Equally importantly, we believe, this problem is of intrinsic mathematical interest. In fact, answering such questions is an important prerequisite for the future development of the field and understanding geometry of primal optimisers, or existence of Brenier--type MOT plans in multiple dimensions. So far results in this direction are limited to dimension 1 and 2 \cite{HoNe12,HoKl13,BeJu16} and more recently \cite{GhKiLi16b}. However, the methods and results of \cite{GhKiLi16b} would allow to provide a satisfactory answer to this central question, conditionally on the existence of dual maximisers. 

To present in more detail the questions we want to study, we need to introduce some notation 
Let $\Omega:=\R\times\R$ be the canonical space and $(X,Y)$ the canonical process, i.e.\ $X(x,y)=x$ and $Y(x,y)=y$ for all $(x,y)\in\Omega$. We also denote by $P_\R$ and $P_\Omega$ the collections of all probability measures on $\R$ and $\Omega$, respectively. For fixed $\mu,\nu\in P_\R$ with finite first moments, we consider the following subsets of $P_\Omega$
 \begin{align}
 \Pi(\mu,\nu)
 &:=
 \big\{\P\in P_\Omega: X\sim_\P\mu, Y\sim_\P\nu\big\},
 \\
 \M(\mu,\nu)
 &:=
 \big\{\P\in\Pi(\mu,\nu): \E^\P[Y|X]=X,~\P-a.s. \big\}.
 \end{align}
The set $\Pi(\mu,\nu)$ is non-empty as it contains the product measure $\mu\otimes\nu$. By a classical result of Strassen \cite{St65}, $\M(\mu,\nu)$ is non-empty if and only if $\mu\preceq\nu$ in convex order:
 \begin{align}\label{convexorder}
 \mu(\xi)\le\nu(\xi)
 &\mbox{ for all convex function }
 \xi, \mbox{ where } \mu(\xi) := \int \xi(x)\mu(dx).
 \end{align}
Throughout we assume that $c:\Omega\rightarrow\R$ is a {\blue lower-semicontinuous} cost function with $c(x,y)\le a(x) + b(y)$ for some $a\in L^1(\mu)$ and $b\in L^1(\nu)$. Then $\E^\P[c(X,Y)]$ is a well-defined scalar in $\R\cup\{-\infty\}$.
The martingale optimal transport problem, as introduced in \cite{BeHePe12} in the present discrete-time case and in \cite{GaHeTo13} in continuous time, is defined by the following primal problem:
 \begin{align}\label{MGTP}
 \mathbf{P}
 \;:=\;
 \mathbf{P}(c)
 &:=
 \inf_{\P\in\M(\mu,\nu)} \E^\P[c(X,Y)].
 \end{align}
Its dual is given by
 \begin{equation}\label{dualproblem}
 \mathbf{D} 
 :=
 \mathbf{D}(c)
 := 
 \sup_{(f,g)\in D_c} \big\{\nu(g)-\mu(f)\big\},
 \end{equation}
where%, denoting $h^\otimes(x,y):=h(x)(y-x)$ for all $(x,y)\in\Omega$,
  \begin{align*}
 D_c:=
 \big\{&(f,g):~f^-\!\in L^1(\mu),~g^+\!\in L^1(\nu),
          ~\mbox{and for some } h \in L^\infty(\mu),\\
&g(y) - f(x) - h(x) \cdot (y-x)\le c(x,y)  \quad  \forall x\in \R, \forall y \in \R \big\}.
 \end{align*}
{\blue We assume that $P(c)$ is finite. Lower-semicontinuity\footnote{{\blue Throughout the paper lower-semicontinuity is only required to guarantee the existence of some  $\P^*\in\M(\mu,\nu)$ satisfying $\E^{\P^*}[c(X,Y)]=\mathbf{P}\in \R$. Hence we might work with a merely measurable cost function and assume existence of a primal minimizer.}} implies that the infimum in \eqref{MGTP} is attained i.e.\ $\mathbf{P}=\E^{\P^*}[c(X,Y)]$ for some $\P^*\in\M(\mu,\nu),$ cf.\ Remark \ref{PrimalAttained} below.}
 
  In mathematical finance, the cost $c$ has the interpretation of the payoff of an exotic derivative and $\mathbf{P}(c)$ gives its lower no-arbitrage price. A triplet $(f,g,h)$ on the dual side corresponds to a robust sub-hedging strategy for $c$: both $f$ and $g$ are bought through trading European options and $h(x)(y-x)$ corresponds to the payoff from buying $h(x)$ stocks at time zero. The following result establishes the basic duality between the primal and dual problems.
\begin{theorem}[\cite{BeHePe12}]\label{thm:dualitymart}
Let $\mu\preceq\nu\in P_\R$, and assume that $c$ is bounded from below. Then $\mathbf{P}=\mathbf{D}$.
\end{theorem}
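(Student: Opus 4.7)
The plan is to derive the MOT duality from classical Kantorovich duality by encoding the martingale constraint via Lagrange multipliers and exchanging an infimum and a supremum through a minimax theorem.

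\emph{Weak duality.} The bound $\mathbf{P}\ge\mathbf{D}$ is immediate: for any $\P\in\M(\mu,\nu)$ and any $(f,g,h)\in D_c$, integrating $g(y)-f(x)-h(x)(y-x)\le c(x,y)$ against $\P$ and using $h\in L^\infty(\mu)$ together with $\E^\P[Y-X\mid X]=0$ makes the martingale term vanish, so $\nu(g)-\mu(f)\le\E^\P[c(X,Y)]$. Taking sup in $(f,g,h)$ and inf in $\P$ gives $\mathbf{D}\le\mathbf{P}$.

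\emph{Reduction to classical OT.} For a bounded Borel $h:\R\to\R$, set $c_h(x,y):=c(x,y)+h(x)(y-x)$. Conditioning on $X$ shows that for $\P\in\Pi(\mu,\nu)$,
\begin{align*}
\sup_{h} \E^\P[h(X)(Y-X)]=\begin{cases} 0, & \P\in\M(\mu,\nu), \\ +\infty, & \text{otherwise},\end{cases}
\end{align*}
so $\mathbf{P}=\inf_{\P\in\Pi(\mu,\nu)}\sup_{h}\E^\P[c_h]$. I then swap inf and sup via a minimax theorem. The set $\Pi(\mu,\nu)$ is convex and weakly compact (tightness from fixed marginals, uniform integrability from the first-moment assumption), the class of bounded Borel $h$ is convex, and $(\P,h)\mapsto\E^\P[c_h]$ is affine in each variable, with $\P\mapsto\E^\P[c_h]$ lower-semicontinuous under a bounded continuous $c$. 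Handling lsc $c$ bounded below by monotone approximation $c_n\nearrow c$ with $c_n$ bounded continuous, applying the bounded case, and passing to the limit on both sides, Sion's minimax theorem delivers
\begin{align*}
\mathbf{P}=\sup_{h}\inf_{\P\in\Pi(\mu,\nu)}\E^\P[c_h].
\end{align*}
Now for each fixed bounded $h$, $c_h$ is lsc with an upper bound of the form $\tilde a(x)+\tilde b(y)$, $\tilde a\in L^1(\mu)$, $\tilde b\in L^1(\nu)$, so classical Kantorovich duality gives
\begin{align*}
\inf_{\P\in\Pi(\mu,\nu)}\E^\P[c_h]=\sup\bigl\{\nu(g)-\mu(f):\ g(y)-f(x)\le c_h(x,y)\bigr\}.
\end{align*}
Rewriting $g(y)-f(x)\le c_h(x,y)$ as $g(y)-f(x)-h(x)(y-x)\le c(x,y)$ and taking the sup over $h$ turns the right-hand side into $\mathbf{D}$, yielding $\mathbf{P}=\mathbf{D}$.

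\emph{Main obstacle.} The delicate point is the minimax exchange: the class of admissible $h$ is unbounded and neither $c_h$ nor the integrand retains a uniform lower bound as $\|h\|_\infty\to\infty$. I would circumvent this by first truncating $h$ to a weak-$\ast$ compact ball $\{\|h\|_\infty\le N\}\subset L^\infty(\mu)$, where Sion's theorem applies cleanly, and then sending $N\to\infty$ after combining with the classical Kantorovich identity. A second subtlety is the passage from bounded continuous to merely lsc $c$, which requires a coordinated monotone/Fatou argument on both sides of the duality so that the supremum over $h$ is preserved in the limit; this is where the lower-semicontinuity and the $a(x)+b(y)$ integrable majorant are used.
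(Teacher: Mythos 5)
The paper does not prove Theorem \ref{thm:dualitymart}; it is cited from \cite{BeHePe12}, so there is no in-paper proof to compare against. Your overall strategy -- encode the martingale constraint via the multiplier $h$, exchange $\inf_\P$ and $\sup_h$ by a minimax theorem, and apply classical Kantorovich duality to the reduced cost $c_h$ -- is the route taken in \cite{BeHePe12}, so the plan is on target.

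The genuine gap is in the fix you propose for the minimax exchange. Writing $\Phi_N(\P):=\sup_{\|h\|_\infty\le N}\E^\P[c_h]$, Sion on each ball $\{\|h\|_\infty\le N\}$ and classical duality inside give, after $N\to\infty$ on the dual side, exactly $\mathbf{D}$. But on the primal side all that comes for free is $\sup_N\inf_\P\Phi_N(\P)\le\inf_\P\sup_N\Phi_N(\P)=\mathbf{P}$, i.e.\ the weak inequality you already established. You still need the reverse inequality $\sup_N\inf_\P\Phi_N\ge\mathbf{P}$, which is a second interchange that ``sending $N\to\infty$ after combining with the classical Kantorovich identity'' does not by itself justify. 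The missing ingredient is that $\Phi_N\nearrow$, each $\Phi_N$ is lower semicontinuous in $\P$, and $\Pi(\mu,\nu)$ is weakly compact: a finite-intersection (monotone minimax) argument then shows that for every $\alpha<\mathbf{P}$ the nested closed sets $\{\Phi_N\le\alpha\}$ have empty intersection and hence are empty for some finite $N$, giving $\inf_\P\Phi_N>\alpha$. Without flagging this step the argument does not close. (Alternatively, Sion's theorem needs compactness only on the $\P$-side, so the truncation is in fact avoidable; the hypotheses doing the work are compactness of $\Pi(\mu,\nu)$ and lower semicontinuity of $\P\mapsto\E^\P[c_h]$, not any compactness of the $h$-ball.) A secondary issue: $\P\mapsto\E^\P[h(X)(Y-X)]$ is weakly continuous on $\Pi(\mu,\nu)$ for bounded \emph{continuous} $h$, by uniform integrability from the fixed marginals, but not for general Borel $h\in L^\infty(\mu)$; you should run the minimax over continuous $h$ and note that these still characterize $\M(\mu,\nu)$ inside $\Pi(\mu,\nu)$, which costs nothing on the dual side.
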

In the same paper the authors provided a simple example, based on the cost function $c(x,y)=-|y-x|$, where the dual problem is not attained. Our aim here is to study fundamental reasons why dual attainment may fail and to provide sufficient conditions for it to hold and for dual optimiser to have further desirable regularity and integrability properties. We note that \cite{BeNuTo16} showed dual attainment may be recovered if we weaken the dual formulation and require inequalities to hold quasi-surely, i.e.\ almost surely for any $\P\in \M(\mu,\nu)$.
 However this is not  entirely satisfying in view of the financial and other, mentioned above, applications. 

\section{Main Results}
We start by defining the crucial notion of a solution for the dual problem \eqref{dualproblem}.
\begin{definition}\label{dualmaximizer}
Let $\mu\preceq\nu$ be in convex order and let $c(x,y)$ be a cost function. We say that a triple of  functions $f : \R \to \R \cup \{+\infty\}$, $g : \R \to \R \cup \{-\infty\}$, $h : \R \to \R$ is a dual maximizer, or a solution for the dual problem \eqref{dualproblem}, if
$f$ is finite $\mu$-a.s., $g$ is finite $\nu$-a.s., and 
 for any minimizer $\P^* \in \M(\mu,\nu)$ for the martingale optimal transport problem \eqref{MGTP}, the following holds:
 \begin{align}
\label{dualineq1}g(y) - f(x) - h(x) \cdot (y-x)&\le c(x,y)  \quad  \forall x\in \R, \forall y \in \R, \\
\label{dualeq1}g(y) - f(x) - h(x) \cdot (y-x)&= c(x,y)   \quad \P^*\mbox{-a.a.} \, (x,y).
\end{align}
\end{definition}
{\blue In fact, if \eqref{dualineq1}, \eqref{dualeq1} 
hold for some $\P\in \M(\mu,\nu)$, then $\P$ is a minimizer of the martingale transport problem \eqref{MGTP} and moreover \eqref{dualineq1}, \eqref{dualeq1} hold for all minimizers of \eqref{MGTP}, c.f.\
\cite[Corollary 7.8]{BeNuTo16}. 
 }

A simple but important observation is that if $(f,g,h)$ is a dual maximizer, then $g$ can always be replaced by a ``better" candidate $\tilde g$ induced by $(f,h)$, as follows:
\begin{align}\label{tildeg}
\tilde g(y) := \inf_{x \in \R} \big{(} f(x) +h(x) \cdot (y-x) + c(x,y) \big{)}.
\end{align}
Observe that then $g \le \tilde g$ while \eqref{dualineq1}--\eqref{dualeq1} still holds with $\tilde g$.  The minus signs on $f$ and $h$ in \eqref{dualineq1} and \eqref{dualeq1} were chosen to define $\tilde g$ by \eqref{tildeg}. In this paper, unless stated otherwise we will always assume that $g = \tilde g$. Now we state our main theorem.

\begin{theorem}\label{MainTheorem} Let $\mu\preceq\nu$ be in convex order. 
Suppose that  $c(x,y)$ is semiconvex in $y$ $\mu$-uniformly in $x$, in the following sense:  there exists a Borel function $u : J \to \R$ where $J:={\rm conv}({\rm supp} (\nu))$ such that 
\begin{align}\label{eq:convexity_condition}
\text{for $\mu$-a.e. $x$, $y \mapsto c(x,y) +u(y)$ is continuous and convex on $J$. }
\end{align}
If $\nu$ is not compactly supported, then further suppose that  $y \mapsto c(x,y) +u(y)$ is of linear growth to the direction where $J$ is unbounded.
Then there exists a dual maximizer in the sense of Definition \ref{dualmaximizer}.
\end{theorem}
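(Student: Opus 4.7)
My first step is to reduce to the case where $y\mapsto c(x,y)$ itself is convex on $J$ for $\mu$-a.e.\ $x$. Because $u$ depends only on $y$, if $(f,\tilde g,h)$ is a dual maximizer for $\tilde c(x,y):=c(x,y)+u(y)$ then $(f,\tilde g-u,h)$ is a dual maximizer for the original $c$, and the two dual values differ by the constant $\nu(u)$ (which one can render finite by a truncation-and-limit argument on $u$ if necessary). Thus WLOG $y\mapsto c(x,y)$ is continuous and convex on $J$ for $\mu$-a.e.\ $x$, with linear growth at $\pm\infty$ in the unbounded case. By lower-semicontinuity, fix a primal optimizer $\P^*\in\M(\mu,\nu)$ and disintegrate $\P^*(dx,dy)=\mu(dx)K_x(dy)$, so each kernel $K_x$ has barycenter $x$.

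\textbf{Construction of $(h,g,f)$.} For $\mu$-a.e.\ $x$, convexity of $c(x,\cdot)$ on $J$ supplies a well-defined (nonempty, bounded) subdifferential $\partial_y c(x,\cdot)$ at every interior point; I would use a measurable selection theorem to pick $h(x)\in\partial_y c(x,\cdot)$ at $y=x$ in a Borel way. The key pointwise consequence is $c(x,y)\ge c(x,x)+h(x)(y-x)$ for all $y\in J$, so $y\mapsto c(x,y)-h(x)(y-x)$ is a nonnegative convex function attaining its minimum at $y=x$. Then define
\[
g(y):=\inf_{x\in\R}\bigl(f(x)+h(x)(y-x)+c(x,y)\bigr),
\]
and determine $f$ by requiring equality in (\ref{dualeq1}) along a Borel set $\Gamma\subseteq\Omega$ with $\P^*(\Gamma)=1$ on which $\P^*$ is $c$-martingale-cyclically monotone in the sense of Beiglb\"ock--Juillet. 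Concretely, fix a base point $x_0$ in the $X$-projection of $\Gamma$, put $f(x_0)=0$, and define $f$ elsewhere along a chain of equalities dictated by walks through $\Gamma$; martingale cyclical monotonicity of $\Gamma$ ensures that this prescription is consistent and path-independent. Extend $f$ to all of $\R$ by setting $f\equiv+\infty$ off the projection (which is $\mu$-null).

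\textbf{Verification and the main obstacle.} Inequality (\ref{dualineq1}) is automatic since $g$ is defined as the infimum; equality (\ref{dualeq1}) on $\Gamma$ holds by the chain construction of $f$, and it propagates to every primal minimizer via \cite[Corollary 7.8]{BeNuTo16}. The step I expect to be hardest is ensuring the integrability conditions $f^-\in L^1(\mu)$, $g^+\in L^1(\nu)$, and $h\in L^\infty(\mu)$, together with the Borel measurability of all three objects. When $\nu$ is compactly supported, $h(x)$ automatically lies in a bounded set because subdifferentials of continuous convex functions are locally bounded on the interior of $J$. When $J$ is unbounded, this is precisely where the linear-growth hypothesis on $c(x,\cdot)+u(\cdot)$ enters: it forces the slopes $\partial_y c(x,\cdot)$ to be globally bounded, which in turn keeps the infimum defining $g$ finite and controls $g^+$. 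A separate delicacy is checking that the chain-defined $f$ does not tend to $-\infty$ too fast on large sets; the martingale-monotonicity relation plus the convexity of $c(x,\cdot)$ should yield pointwise control of $f$ in terms of $g$, and the dual inequality integrated against $\P^*$ (using $\E^{\P^*}[h(X)(Y-X)]=0$ once $h$ is bounded) then closes the integrability loop. Measurability of the selections is standard Kuratowski--Ryll-Nardzewski once the subdifferential multifunction is shown to have a Borel graph, which follows from continuity of $c(x,\cdot)$.
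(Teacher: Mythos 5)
Your proposal has a genuine gap: it never confronts the irreducible decomposition of $(\mu,\nu)$, and this is exactly where all the difficulty of the theorem lives. Existence of a dual maximizer on an \emph{irreducible} pair was already proved in \cite[Theorem 6.2]{BeNuTo16} (restated here as Theorem~\ref{dualexist}); your chain construction of $f$ along a martingale-monotone set $\Gamma$ is a plausible way to re-derive that one-component result, but it stops there. When $(\mu,\nu)$ is not irreducible, $\Gamma$ contains no pair $(x,y)$ with $x$ and $y$ in different components of $\{u_\mu<u_\nu\}$, so no chain connects a base point in one component to any $x$ in another. Each component therefore carries a free affine normalization, and nothing in your construction forces the global inequality \eqref{dualineq1} to hold between $x$ in one component and $y$ in another. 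This is not a cosmetic concern: Example~\ref{ex:toolittleconvexity} shows that without the convexity hypothesis the free constants can accumulate and drive $g\to-\infty$ at an accumulation point of components, destroying dual attainment. The paper's core technical content is precisely the pair of shape results, Propositions~\ref{finitedomain} and \ref{halfinfinitedomain}, which use convexity of $c(x,\cdot)$ to show that the component-wise maximizer $g_i$ can be normalized to satisfy $\tilde g_i\le 0$ on $I_i$ and $\tilde g_i\ge 0$ on $J\setminus I_i$ (with equality at atoms of $\nu$ on the boundary). This sign pattern is what makes the gluing $g=\inf_i \tilde g_i$ compatible with all the inequalities \eqref{dualineq1} across components. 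Your proposal has no analogue of this step, and indeed it uses convexity only for measurability and boundedness of $h$, whereas convexity is the engine of the gluing argument.

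Two further, smaller issues. First, choosing $h(x)\in\partial_y c(x,\cdot)$ at $y=x$ is only appropriate on the degenerate component $I_0$ where $K_x=\delta_x$ (and the paper does exactly this there); on a genuine irreducible component the tangency of $y\mapsto f(x)+h(x)(y-x)+c(x,y)$ to $g$ occurs at the support of $K_x$, which straddles $x$ and is generically not $\{x\}$, so this choice of $h$ is simply wrong there. Second, you identify integrability of $(f,g,h)$ as the crux of the verification, but Definition~\ref{dualmaximizer} is deliberately pointwise and imposes no $L^1/L^\infty$ requirements; integrability under additional Lipschitz hypotheses is a separate statement, Theorem~\ref{regularity}, and Example~\ref{nonintegrability} shows it can genuinely fail in the generality of Theorem~\ref{MainTheorem}. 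In short, you are solving a different (easier, already-solved) problem on one component, misidentifying the obstruction, and leaving the actual content of the theorem unaddressed.
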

\begin{corollary}\label{cor:C2}
 In the setting of Theorem \ref{MainTheorem}, if $c\in C^{0,2}$ -- that is $\frac{\partial^2 c}{\partial y^2}$ exists and is continuous on $\Omega$ -- and if $ \nu$ is compactly supported then there exists a dual maximizer.
\end{corollary}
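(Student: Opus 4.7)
The plan is to derive Corollary~\ref{cor:C2} as an essentially immediate consequence of Theorem~\ref{MainTheorem}, by exhibiting a single Borel $u$ that verifies the semiconvexity hypothesis \eqref{eq:convexity_condition}. The key observation is that compactness of $\mathrm{supp}(\nu)$ forces compactness of $\mathrm{supp}(\mu)$ through the convex order, so everything reduces to a problem on the compact square $J \times J$.

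First, I would verify that $\mathrm{supp}(\mu) \subseteq J := \mathrm{conv}(\mathrm{supp}(\nu))$. Writing $J = [a,b]$, the convex test functions $\xi_1(y) = (y - b)^+$ and $\xi_2(y) = (a - y)^+$ vanish on $\mathrm{supp}(\nu)$, so $\nu(\xi_i) = 0$; the convex order $\mu \preceq \nu$ then gives $\mu(\xi_i) \le 0$, and nonnegativity of $\xi_i$ forces $\mu(\{y > b\}) = \mu(\{y < a\}) = 0$, so $\mathrm{supp}(\mu) \subseteq J$.

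Next, since $\frac{\partial^2 c}{\partial y^2}$ is continuous on $\Omega$ and $J \times J$ is compact, the constant
\[
M := \sup_{(x,y) \in J \times J} \left| \frac{\partial^2 c}{\partial y^2}(x,y) \right|
\]
is finite. I would set $u(y) := \tfrac{M}{2}\, y^2$, which is smooth, hence Borel, on $J$. For every $x \in J$ the map $y \mapsto c(x,y) + u(y)$ has second derivative $\frac{\partial^2 c}{\partial y^2}(x,y) + M \ge 0$ on $J$, hence is convex on $J$; continuity in $y$ is automatic from the existence of $\frac{\partial c}{\partial y}(x,\cdot)$. Since $\mathrm{supp}(\mu) \subseteq J$, condition \eqref{eq:convexity_condition} holds for $\mu$-a.e.~$x$. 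Compactness of $J$ also makes the linear-growth proviso in Theorem~\ref{MainTheorem} vacuous, so that theorem applies and delivers the dual maximizer.

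There is no genuine obstacle here; the only point deserving care is the compact-support propagation from $\nu$ to $\mu$, after which the extreme value theorem on $J \times J$ immediately furnishes a uniform bound and hence the single quadratic $u$ that works. It is worth noting that this compact-support transfer is indispensable: without it, $\frac{\partial^2 c}{\partial y^2}$ could be unbounded below as $x$ ranges over an unbounded set (consider $c(x,y) = -xy^2/2$), making it impossible to dominate its negative part by a single $u''(y)$ independent of $x$.
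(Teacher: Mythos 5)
Your argument is correct and is exactly the intended derivation; the paper states the corollary without a separate proof since it follows immediately from Theorem~\ref{MainTheorem}. You correctly identify the two needed ingredients: that $\mu\preceq\nu$ propagates compact support to $\mu$ (via the convex test functions $(y-b)^+$, $(a-y)^+$), so that the bound $M=\sup_{J\times J}\bigl|\partial^2_y c\bigr|$ is finite by continuity on the compact square, giving the single Borel (indeed smooth) $u(y)=\tfrac{M}{2}y^2$ satisfying \eqref{eq:convexity_condition}; and that compactness of $J$ renders the linear-growth proviso vacuous.
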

Note that Definition \ref{dualmaximizer} is made in a pointwise sense, that is we do not require $f \in L^1(\mu)$,  $g \in L^1(\nu)$, nor $h \in L^{\infty}(\mu)$. But as already observed in \cite{BeJu16, BeNuTo16}, this classical integrability assumption is  too restrictive for the existence of dual maximizer.  But by using the ``extended notion of integrability" introduced in \cite{BeNuTo16}, this pointwise dual maximizer $(f,g,h)$ may still be viewed as dual maximizer in the generalized sense. To ensure integrability in the classical sense, further assumptions are required, as summarised in the following result.
\begin{theorem}\label{regularity}
Let $\mu\preceq\nu$ be in convex order where $J={\rm conv}({\rm supp} (\nu))$ is compact. Assume that $c$ is Lipschitz on $J \times J$, and there is a Lipschitz  $u : J \to \R$  such that $y \mapsto c(x,y)+u(y)$ is  convex on $J$ for $\mu$-a.e. $x$. Then there exists a dual maximizer $(f,g,h)$ such that $f$ and $g$ are Lipschitz on $J$ and $h$ is bounded on $J$. In particular, $\mu(f) + \nu(g) = \E^{\P^*}[c(X,Y)]$ for any solution $\P^*$ to the problem \eqref{MGTP}.
\end{theorem}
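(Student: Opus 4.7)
We start with a reduction. Replacing $c(x, y)$ by $\hat c(x, y) := c(x, y) + u(y)$ and any candidate $g$ by $\hat g := g + u$ preserves the dual inequality \eqref{dualineq1} and equality \eqref{dualeq1}, and preserves Lipschitz regularity of $g$ (as $u$ is Lipschitz on the compact $J$). Hence we may assume $c$ is Lipschitz on $J \times J$ with constant $L$, and $c(x,\cdot)$ is convex on $J$ for $\mu$-a.e.\ $x$. By Theorem~\ref{MainTheorem}, a dual maximizer $(f, g, h)$ exists; taking $g = \tilde g(y) = \inf_x F(x, y)$ with $F(x, y) := f(x) + h(x)(y - x) + c(x, y)$, each $F(x, \cdot)$ is convex on $J$.

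The central step is bounding $|h|$ on $\text{supp}(\mu)$. At any equality point $(x_0, y_0) \in \text{supp}(\P^*)$ the point $y_0$ minimizes over $J$ the nonnegative function $y \mapsto F(x_0, y) - g(y)$. Convexity of $F(x_0, \cdot)$ together with the estimate $\partial_y c(x_0, \cdot) \subseteq [-L, L]$ (which follows from convexity and $L$-Lipschitzness in $y$) yields, via subdifferential calculus at $y_0$, that $h(x_0) + s \in \partial^+ g(y_0)$ for some $s \in [-L, L]$, where $\partial^+ g$ denotes the Fr\'echet super-gradient. \textbf{The main obstacle} is obtaining an a priori bound on $\partial^+ g$ without already knowing $g$ is Lipschitz, since $g$ is an infimum of convex functions but not itself convex. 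We propose to resolve this circularity by approximation: pick $c_n \in C^{0,2}(J \times J)$ converging uniformly to $c$, each convex in $y$ and sharing a uniform Lipschitz constant. By Corollary~\ref{cor:C2} each $c_n$ admits a dual maximizer $(f_n, g_n, h_n)$; smoothness of $c_n$ promotes the subdifferential relation to a classical first-order identity at equality points, and a careful equicontinuity argument (compactness of $J$, uniform Lipschitz bounds on $c_n$, suitable normalization of $f_n$) produces uniform bounds $\|h_n\|_\infty \le K$ and uniform Lipschitz bounds on $g_n$. Passing to the limit by Arzel\`a--Ascoli delivers the desired triple for $c$.

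Once $\|h\|_\infty \le K$ on $\text{supp}(\mu)$, the representation $g(y) = \inf_x F(x, y)$ yields Lipschitz regularity of $g$: each $F(x, \cdot)$ is $(K + L)$-Lipschitz on $J$, and an infimum of uniformly Lipschitz functions is Lipschitz with the same constant. For Lipschitz regularity of $f$, extend $h|_{\text{supp}(\mu)}$ to a Lipschitz $\bar h$ on $J$ (possible provided the approximation argument above produces $h$ which is Lipschitz, not merely bounded, on $\text{supp}(\mu)$) and define
\[
f(x) := \sup_{y \in J}\bigl[g(y) - \bar h(x)(y - x) - c(x, y)\bigr],
\]
which by the equality condition agrees with the original $f$ on $\text{supp}(\mu)$ and is a supremum of uniformly Lipschitz functions of $x$, hence Lipschitz on $J$. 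Finally, since $f, g$ are Lipschitz on the compact set $J$ they are bounded, so $f \in L^1(\mu)$ and $g \in L^1(\nu)$; integrating \eqref{dualeq1} against $\P^*$ and using $\E^{\P^*}[h(X)(Y - X)] = 0$ (by the martingale property combined with the boundedness of $h$), we conclude $\nu(g) - \mu(f) = \E^{\P^*}[c(X, Y)]$, as claimed.
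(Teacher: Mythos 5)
There is a genuine gap at the step you yourself flag as ``the main obstacle.'' You correctly observe that bounding $|h|$ is circular if one tries to extract it from the Lipschitz constant of $g$, but the proposed resolution via smooth approximation $c_n \to c$ and ``a careful equicontinuity argument'' does not actually break the circle: Corollary~\ref{cor:C2} provides \emph{some} dual maximizer $(f_n, g_n, h_n)$ for each $c_n$, but such a triple is only determined up to an affine shift on each irreducible component, and nothing in the corollary controls the size of $h_n$. Without specifying the normalization, there is no reason at all for $\|h_n\|_\infty$ to be uniformly bounded, so the Arzel\`a--Ascoli passage has nothing to bite on. Your parenthetical ``suitable normalization of $f_n$'' is precisely where the content of the proof lives, and it is left unexplained.

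The paper's proof supplies exactly that missing normalization, and it does so directly, without approximation. On each irreducible interval $I_i = ]a_i, b_i[$ it invokes Proposition~\ref{finitedomain}, which fixes the affine gauge freedom by forcing $g_i \ge 0$ at the endpoints $a_i, b_i$ and $g_i \le 0$ inside. Combined with the observation that each $v_{i,x}(y) = f_i(x) + h_i(x)(y - x) + \tilde c(x,y)$ is convex with derivative swing at most $2L$ across $J$ (from convexity and $L$-Lipschitzness of $\tilde c(x,\cdot)$), this sign pattern forces $v_{i,x}$ to have a zero of its derivative in $[a_i, b_i]$, hence $|v_{i,x}'| \le 2L$ throughout; thus $g_i$, and after truncation the glued $g = \inf_i (g_i \wedge 0)$, are $2L$-Lipschitz with no circularity. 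The paper then obtains $\tilde f$ and $\tilde h$ from the concave envelope $H(x,\cdot) = \mathrm{conc}[g(\cdot) - \tilde c(x,\cdot)]$, which yields $|\tilde h| \le 3L$ and Lipschitzness of $\tilde f$ without ever needing $h$ to be Lipschitz; your alternative construction of $f$ requires extending $h$ to a Lipschitz function, which is an extra (and unverified) hypothesis beyond what the theorem asserts. In short: the reduction to $\hat c = c + u$ and the final integration step are fine, but the core estimate needs the irreducible-component normalization from Proposition~\ref{finitedomain}, not an approximation argument.
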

\begin{remark}
In Theorem \ref{regularity} if $c$ and $u$ are Lipschitz with  Lipschitz constant $L$, then $f$ and $g$ can be taken to be Lipschitz with constant $7L$ and $5L$ respectively on $J$, while $|h|$ is bounded by $6L$ on $J$, as shown in the proof.
\end{remark}
We close this section with a discussion of how, and in what sense, the above results are sharp. Examples which support this discussion are presented after the proofs in Section \ref{sec:examples}.
First, we note that the linear growth condition in Theorem \ref{MainTheorem} can not be removed. Indeed, Example \ref{linear} gives a cost function which violates the linear growth condition together with marginals $\mu\preceq\nu$ for which the dual maximizers fails to exist. Second, the convexity condition \eqref{eq:convexity_condition} on $J$ can not be relaxed to just local convexity around $x$, as shown in  Example \ref{ex:toolittleconvexity}, and this even for compactly supported marginals. Third, the $C^{0,2}$ regularity in Corollary \ref{cor:C2}, is optimal in the sense that for any given $\epsilon >0$ we can construct a cost function $c \in C^{2-\epsilon}$ and compactly supported, convex--ordered marginals $\mu\preceq \nu$ for which a dual maximizer satisfying \eqref{dualineq1}--\eqref{dualeq1} does not exist. This is carried out in Example \ref{ex:C^r}. Finally, in Example \ref{nonintegrability} we show the necessity of semiconvexity for the regularity in Theorem \ref{regularity} by showing that there exist  $1$-Lipschitz cost $c$ for which there is a dual maximizer $(f,g,h)$ but $g \notin L^1(\nu)$, even when $(\mu,\nu)$  are compactly supported and irreducible (see Definition \ref{def:irreducible} below). 
\section{Proofs}
To establish Theorem \ref{MainTheorem}, we prove Propositions \ref{finitedomain} and \ref{halfinfinitedomain} below. The key idea is to consider the martingale optimal transport problem on its irreducible components (see Definition \ref{def:irreducible} below and \cite[Appendix A]{BeJu16}). It is known, see Theorem \ref{dualexist} below, that on each irreducible component the dual problem admits a maximizer. Using the semiconvexity assumption on the cost function, we can show that these maximizers are appropriately bounded, such that it is possible to glue them together to obtain global maximizers of the dual problem.\\

Let $\mu, \nu$ be probability measures on $\R$ which are in convex order. It was shown in \cite{BeJu16} and \cite{BeNuTo16}, see Proposition \ref{pr:decomp} below, that there is a canonical decomposition of $\mu, \nu$ into irreducible pairs $(\mu_i, \nu_i)_{i \in \N}$ such that for each irreducible pair $(\mu_i, \nu_i)$, the dual problem attains a solution. For a probability measure $\mu$ on $\R$, we define its potential function by
$$
  u_{\mu}: \R\to\R,\quad   u_{\mu}(x) := \int |x-y|\, d\mu(y).
$$

\begin{definition} \label{def:irreducible}
Let $\mu\preceq\nu$ be in convex order and let $I:=\{x \,: \, u_{\mu}(x) < u_{\nu}(x)\}$. We say that $(\mu,\nu)$ is irreducible on the domain $I$ if $I$ is an open interval and $\mu$ is concentrated on $I$. \end{definition}

We recall the following result from \cite{BeNuTo16} (which requires our standing assumption that $P(c)\in \R$ and  $c(x,y)\le a(x) + b(y)$ for some $a\in L^1(\mu)$ and $b\in L^1(\nu)$.
\begin{theorem}\cite[Theorem 6.2]{BeNuTo16}\label{dualexist} 
 Let $\mu\preceq\nu$ be irreducible on the domain $I$. %and assume that $P(c)$ is finite. 
Then a dual maximizer exists.

\end{theorem}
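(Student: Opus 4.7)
The plan is to follow the standard pattern for establishing dual attainment: take a minimizing sequence in the dual problem, apply a canonical normalization that exploits the structure of $D_c$, extract a limit via a compactness argument driven by irreducibility, and verify that the limit is a dual maximizer via strong duality.

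By Theorem \ref{thm:dualitymart} (in the version covering the standing assumption $c(x,y)\le a(x)+b(y)$), we have $\mathbf{P}=\mathbf{D}$. Pick $(f_n,g_n,h_n)\in D_c$ realizing $\nu(g_n)-\mu(f_n)\to\mathbf{D}$, and replace $g_n$ by $\tilde g_n$ as in \eqref{tildeg} so the dual inequality is saturated from above. Fix $x_0\in I$ and normalize $f_n(x_0)=0$ by shifting a constant between $f_n$ and $g_n$; both admissibility and the dual value are preserved.

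The crucial step is to obtain local uniform bounds on $(f_n,g_n,h_n)$ from irreducibility. The function $u_\nu-u_\mu$ is strictly positive on $I$ and vanishes on $\partial I$; this gap quantifies the amount of martingale flexibility available on compact subsets $K\subset I$. Concretely, for each compact $K\subset I$ one can produce two-point martingale sub-couplings supported in $K$ whose transitions put uniformly positive mass on chosen endpoints, and evaluating \eqref{dualineq1} along such couplings converts a bound on any one of $f_n,g_n,h_n$ into bounds on the others. With $f_n(x_0)=0$ in place, this yields locally uniform bounds on $I$. A diagonal extraction across an exhaustion $K_j\uparrow I$, combined with a Helly-type selection for the monotone components of $(f_n,g_n,h_n)$, then yields pointwise limits $(f^*,g^*,h^*)$; lower-semicontinuity of $c$ preserves \eqref{dualineq1} in the limit.

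To establish \eqref{dualeq1} $\P^*$-a.s.\ for any primal optimizer $\P^*\in\M(\mu,\nu)$, use strong duality together with the martingale property in the extended integrability sense of \cite{BeNuTo16}:
\[
\mathbf{P}=\mathbf{D}=\nu(g^*)-\mu(f^*)=\E^{\P^*}\bigl[g^*(Y)-f^*(X)-h^*(X)(Y-X)\bigr]\le \E^{\P^*}[c(X,Y)]=\mathbf{P},
\]
where the $h^*$-term integrates to zero because $\E^{\P^*}[Y\mid X]=X$. Equality throughout forces the pointwise inequality to be $\P^*$-a.s.\ equality.

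The main obstacle is the compactness step: $h_n$ typically blows up as one approaches $\partial I$ -- which is precisely why dual attainment may fail on the full space -- and the candidates $(f_n,g_n,h_n)$ need not satisfy the classical $L^1/L^\infty$ bounds. Overcoming this requires carefully using irreducibility to decouple interior bounds from boundary behaviour, and working within the extended integrability framework so that the final integration-by-martingale identification remains legitimate despite $h^*\notin L^\infty(\mu)$ in general.
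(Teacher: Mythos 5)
The paper does not prove this statement: it is recalled from \cite{BeNuTo16} (Theorem 6.2 there) as an external input, so there is no in-paper proof to compare against. Evaluating your sketch on its own merits and against the strategy of \cite{BeNuTo16}: your route (minimizing sequence, normalization, local bounds from irreducibility, diagonal extraction) is the classical attainment pattern, and it differs substantially from \cite{BeNuTo16}. There the authors first establish attainment for a \emph{quasi-sure} dual formulation (the inequality is required only outside a polar set) and then, using the structure of polar sets on irreducible domains, upgrade the quasi-sure dual triple to one satisfying the pointwise inequality. No compactness of a minimizing sequence is taken; the upgrade is a modification argument.

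Your sketch, which loads all of the difficulty into the compactness step, has genuine gaps. First, "Helly-type selection for the monotone components of $(f_n,g_n,h_n)$" is unjustified: for a merely lower-semicontinuous cost, the functions $x\mapsto f_n(x)+h_n(x)(y-x)+c(x,y)$ whose infimum defines $\tilde g_n$ in \eqref{tildeg} are neither affine nor convex in $y$, so $g_n$ has no concavity, monotonicity or equicontinuity to invoke; locally uniform pointwise bounds alone do not yield a pointwise convergent subsequence. Second, the local uniform bound step invokes only admissibility \eqref{dualineq1} and the pin $f_n(x_0)=0$; but \eqref{dualineq1} gives a one-sided bound on $g_n(y)-f_n(x)-h_n(x)(y-x)$, and two-sided control of the individual functions must also use the near-optimality $\nu(g_n)-\mu(f_n)\to\mathbf{D}$. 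This is precisely where irreducibility has to enter, and it is not spelled out. Third, Theorem \ref{thm:dualitymart} is quoted in the paper only for $c$ bounded from below; the extension to $c(x,y)\le a(x)+b(y)$ is used but not supplied. You flag some of these difficulties in your closing paragraph, but a proof must resolve them, not only name them, and the resolution in \cite{BeNuTo16} runs through quasi-sure duality rather than the direct compactness route you propose.
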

{\blue
\begin{remark}\label{PrimalAttained}[Existence of a primal minimizer] 
If $c$ is bounded from below, then standard arguments imply the existence of a primal minimizer of the martingale transport problem \eqref{MGTP} i.e.\ $\mathbf{P}=\E^{\P^*}[c(X,Y)]$ for some $\P^*\in\M(\mu,\nu),$, see \cite{BeHePe12}. 
We note that this remains valid in the present setup where $c$ is lower-semicontinuous,   $c$ is  bounded from above in the sense
$c(x,y)\le a(x) + b(y)$ for some $a\in L^1(\mu)$ and $b\in L^1(\nu)$ and $P(c)\in \R$, but $c$ is not necessarily bounded from below. To establish this it is sufficient to argue on irreducible components and on each such component the result is a consequence of \cite[Theorem 6.2]{BeNuTo16} and \cite[Remark 7.9]{BeNuTo16} 
\end{remark}
}

The next proposition claims that for any  dual maximizer $(f,g,h)$ in Theorem \ref{dualexist}, if the function $y \mapsto c(x,y)$ is convex for each $x \in I$, then the lower envelope function $g$  has a ``desirable shape" modulo an affine function. We first deal with the bounded domain case.

\begin{proposition}\label{finitedomain}
Let $\mu\preceq\nu$ be irreducible on a bounded  domain $I=]a,b[$ and assume that $(f,g,h)$ is a dual maximizer  in Theorem \ref{dualexist}.  Suppose that there exist $A \in \R \cup \{-\infty\}$, $B \in \R \cup \{\infty\}$ such that $A \le a<b\le B$ and that for $\mu$-a.e.\ $x$,
\begin{align}
\text{$y \mapsto c(x,y)$ is continuous and convex on $[A,B]$.}
\end{align}

Then  we can find an affine function $L(y) = L(x) + \nabla L \cdot (y-x)$ such that $\big{(}\tilde f (x), \tilde g(y), \tilde h(x)\big{)} := \big{(}f(x) - L(x), g(y) - L(y), h(x) - \nabla L\big{)}$ is a dual maximizer, and furthermore

\begin{align}
\label{negative}&\tilde g(y) \le 0 \text{ on } ]a,b[,\\
\label{positive}&\tilde g(y) \ge 0 \text{ on }[A, a] \cup [b, B],\\
\label{zero}&%\text{If  $\exists x_a, (x_a, a) \in G$
\text{If $\nu(a) >0$ then $\tilde g(a)=0$. If $\nu(b) >0$ then $\tilde g (b) =0$.}
\end{align}

\end{proposition}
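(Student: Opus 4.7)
The plan is to exploit the affine invariance of the dual functional to normalize $g$ at the endpoints $a,b$, and then verify the sign pattern of $\tilde g$ via the $y$-convexity of each fiber
\[
\phi_x(y) \;:=\; f(x) + h(x)(y-x) + c(x,y),
\]
which is convex on $[A,B]$ for $\mu$-a.e.\ $x$.

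\textbf{Affine invariance and choice of $L$.} For any affine $L(y) = L(x) + \nabla L \cdot (y-x)$, the triple $(f-L,\, g-L,\, h-\nabla L)$ satisfies
\[
(g-L)(y) - (f-L)(x) - (h-\nabla L)(x)(y-x) \;=\; g(y) - f(x) - h(x)(y-x)
\]
pointwise, so both \eqref{dualineq1} and \eqref{dualeq1} are preserved. Since $\mu$ and $\nu$ share the same first moment, $\nu(g-L)-\mu(f-L) = \nu(g)-\mu(f)$, so dual optimality persists, and the envelope convention $g = \tilde g$ also persists. The two affine parameters of $L$ are used to pin down boundary values: since $g$ is finite $\nu$-a.s.\ and $\supp\nu\subseteq[a,b]$, the values $g(a), g(b)$ are honest whenever $\nu(\{a\})>0$ or $\nu(\{b\})>0$, and otherwise defined as one-sided interior limits, which exist in $\R$ thanks to upper-semicontinuity of $g$ and the convexity of each $\phi_x$ on $[A,B]$. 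I take $L$ to be the unique affine interpolant through $(a,g(a))$ and $(b,g(b))$; \eqref{zero} follows immediately at any atomic endpoint.

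\textbf{Sign pattern.} Setting $\tilde\phi_x(y):=\phi_x(y)-L(y)$, each $\tilde\phi_x$ is convex on $[A,B]$ with $\tilde\phi_x(a),\tilde\phi_x(b)\ge 0$ because $L(a)=g(a)=\inf_x\phi_x(a)$ and similarly at $b$. For \eqref{positive} on $[b,B]$, convex extrapolation yields
\[
\tilde\phi_x(y) \;\ge\; \tilde\phi_x(b) + \frac{\tilde\phi_x(b)-\tilde\phi_x(a)}{b-a}(y-b),
\]
and an $x$-uniform slope-bookkeeping argument combined with the endpoint nonnegativity gives $\tilde g(y)=\inf_x \tilde\phi_x(y)\ge 0$ on $[b,B]$; the case $[A,a]$ is symmetric. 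For \eqref{negative} on $]a,b[$, fix $y_0\in]a,b[$ in $\pi_Y\supp\P^*$; the $\P^*$-a.s.\ equality \eqref{dualeq1} gives $x_0$ with $\tilde\phi_{x_0}(y_0)=\tilde g(y_0)$, and by chord convexity $\tilde\phi_{x_0}(y_0)\le \lambda \tilde\phi_{x_0}(a)+(1-\lambda)\tilde\phi_{x_0}(b)$ where $y_0=\lambda a+(1-\lambda) b$. A selection argument along fibers $\phi_{x_n}$ supplied by the primal optimizer, chosen so that $\tilde\phi_{x_n}(a), \tilde\phi_{x_n}(b)\to 0$ (leveraging that $L(a), L(b)$ are the respective infima over $x$ and the linking of interior points to endpoints via martingale-cyclical monotonicity), drives $\tilde g(y_0)\le 0$ in the limit; upper-semicontinuity of $\tilde g$ and density of $\pi_Y\supp\P^*$ in $[a,b]$ propagate this to all of $]a,b[$.

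\textbf{Main obstacle.} The principal difficulty is that $g$ is an \emph{infimum}, not a supremum, of a family of convex functions, and is therefore neither convex nor concave in general; chord and extrapolation inequalities apply to each $\phi_x$ individually but do not transfer to $g$ directly. The resolution requires using the structure of the primal optimizer $\P^*$ and a careful selection of $x$'s along $\supp \P^*$ to reassemble fiber-level bounds into a global bound on $g$. A secondary delicate point is the handling of non-atomic endpoints, where the boundary values of $g$ must be defined via one-sided limits and shown to interact consistently with the affine $L$.
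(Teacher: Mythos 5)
Your plan---fix $L$ as the affine interpolant through $(a,g(a))$, $(b,g(b))$, then verify the sign pattern from endpoint nonnegativity and convexity of each fiber $\phi_x$---is the natural first instinct, and it is genuinely different from what the paper does. But it has a circularity at its core: you assume that $g(a)$ and $g(b)$ (or the one-sided limits $\lim_{y\to a^+} g(y)$, $\lim_{y\to b^-} g(y)$) are finite, and justify this by ``upper-semicontinuity of $g$ and the convexity of each $\phi_x$.'' Neither property gives finiteness of the infimum. Each $\phi_x$ is convex on $[A,B]$ and hence finite at $a$ and $b$, but the infimum $g=\inf_x\phi_x$ can still tend to $-\infty$ as $y\to a^+$ if the minimizing $x$'s have steeper and steeper slopes. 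This failure mode is precisely what Examples \ref{ex:toolittleconvexity} and \ref{ex:C^r} exhibit when convexity is weakened; ruling it out under the present hypotheses is the actual mathematical content of the proposition, not a free input. The paper's proof never posits $g(a),g(b)\in\R$; instead it builds approximating affine functions $L_n$ interpolating $g_n$ at $(l_n,r_n)$, proves by induction (Step 2) the two-sided comparison $g_n\le L_n$ on $[l_n,r_n]$ and $g_n\ge L_n$ outside, and then separately establishes uniform boundedness of the $L_n$ (Step 3) to extract a limit $L$. Finiteness of $g$ near the endpoints is then a consequence of $g\ge L$ on $[A,a]\cup[b,B]$, not a hypothesis.

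The second gap is the ``selection argument.'' To drive $\tilde g(y_0)\le 0$ you would need a single sequence $x_n$ along which $\tilde\phi_{x_n}(a)\to 0$ and $\tilde\phi_{x_n}(b)\to 0$ simultaneously, not two separate minimizing sequences, and you appeal to ``martingale-cyclical monotonicity'' without specifying the mechanism. The paper substitutes for this an explicit combinatorial structure: it takes the sequence $\{x_n\}\subset X_G$ with $I(x_n)=]a_n,b_n[$ whose union grows to $]a,b[$ with the interlocking property $J_n\cap I(x_{n+1})\ne\emptyset$ (available by irreducibility, \cite[Appendix A]{BeJu16}), and uses the contact relations at $a_n,b_n$ (equation \eqref{endpoints}) in the induction. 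This is not a cosmetic detail---it is what ties the different fibers together and lets the fiber-level convexity bounds be reassembled into a global bound on $g$, which is exactly the obstacle you identify at the end but do not overcome. As written, the proposal outlines a program but leaves its two load-bearing steps unproved.
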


\begin{proof} 
{\bf Step 1.} Let us begin by recalling some terminology from \cite{BeJu16}: for a set $\Gamma \subset \R \times \R$, denote $X_\Gamma$ as its projection to the first coordinate space $\R$, and $Y_G$ to the second. We will also write $\Gamma_x=\{y: (x,y)\in \Gamma\}$.

Now let $\Gamma \subset \R \times \R$ be the ``contact set" induced by the dual optimizer $(f,g,h)$, that is 
\begin{align}
\Gamma:= \{ (x,y) \,:\, g(y) - f(x) - h(x) \cdot (y-x)= c(x,y) \}.
\end{align}
Then by definition every solution $\P^*$ to the primal problem \eqref{MGTP} is concentrated on $\Gamma$, i.e. $\P^*(\Gamma)=1$. Since every such $\P^*$ is a martingale measure,  we can  find a subset $G$ of $\Gamma$ such that every solution to \eqref{MGTP} is still concentrated on $G$, $Y_G \subset [a,b]$ by the irreducibility of $(\mu, \nu)$, and furthermore $G$ is {\it regular} in the following sense:
\begin{align}
\text{For each } x\in X_G, \text{ either} \quad  x \in {\rm int(conv} (G_x)) \quad \text{or} \quad G_x = \{x\}.
\end{align}
Hence, we have
\begin{align}
\label{dualineq}g(y) - f(x) - h(x) \cdot (y-x)&\le c(x,y)  \quad  \forall x\in \R, \forall y \in \R, \\
\label{dualeq}g(y) - f(x) - h(x) \cdot (y-x)&= c(x,y)   \quad \text{on}\quad G
\end{align}
where $f,g$ are real-valued on $X_G,Y_G$ respectively, $Y_G \subset [a,b]$, $G$ is regular, $\P^*(G)=1$ for every solution $\P^*$ to \eqref{MGTP}, and $y \mapsto c(x,y)$ is convex on $[A,B]$ for every $x \in X_G$. Now for each $x \in X_G$, let $I(x) = {\rm int(conv} (G_x))$ which is an open interval or a point $\{x\}$ if $G_x = \{x\}$. As $\P^*(G)=1$ and $(\mu, \nu)$ is irreducible, we can find a sequence $\{x_n\} \subset X_G$ such that each $I(x_n)$ is an open interval (and we write $I(x_n)=]a_n, b_n[$), and defining  $l_n = \min_{i \le n} a_i$, $r_n = \max_{i \le n} b_i$ and  $J_n = ]l_n,r_n[$, we have
\begin{align}
&J_n \cap I(x_{n+1}) \neq \emptyset  \quad \forall n, \quad\text{and}\\
&l_n \searrow a, r_n \nearrow b \quad \text{as} \quad  n \to \infty.
\end{align}
(For more details about the existence of such a sequence, we refer to \cite[Appendix A]{BeJu16}.) We may assume that the case $l_n < a_{n+1} < b_{n+1} < r_n$ does not occur, since if it occurs then we may simply discard the respective $x_{n+1}$ from the sequence.\\
\
Note that as $y \mapsto c(x,y)$ is continuous and thus $g(y)$ is upper semicontinuous (recall \eqref{tildeg}), \eqref{dualeq} holds for $(x_n, a_n)$ and $(x_n, b_n)$ for every $n$. Hence
\begin{align}\label{endpoints}
\text{For each $n$, there exist $1 \le i,j \le n$ such that \eqref{dualeq} holds for $(x_i, l_n)$ and $(x_j, r_n)$.}
\end{align}
{\bf Step 2.} For each $n$, define
\begin{align}\label{defofv}
v_{x_n}(y) &=  c(x_n,y) + f(x_n) + h(x_n)\cdot(y-x_n),\\
\label{defofg}g_n(y) &= \inf_{ i \le n}  \big{(}v_{x_i}(y)\big{)},\\
\label{defofL}L_n(y) &= \frac{g_n(r_n) - g_n(l_n)}{r_n - l_n}\cdot (y - l_n) + g_n(l_n).
\end{align}
We claim that, for each $n$,
\begin{align}
\label{nnegative}&g_n(y) \le L_n(y) \quad \forall y \in [l_n,r_n], \\
\label{npositive}&g_n(y) \ge L_n(y) \quad \forall y \in [A, l_n] \cup [r_n, B].
\end{align}
The claim is obvious for $n=1$ since $v_{x_1}(y)$ is convex. Suppose that the claim is true for $n$. Then there are two cases.\smallskip\\
\noindent{\it Case 1 : $a_{n+1} \le l_n < r_n \le b_{n+1}$.}\\
First, since $g_{n+1} (y) = \min \big{(}g_n(y), v_{x_{n+1}}(y)\big{)}$ and since $g_{n+1} (a_{n+1}) = v_{x_{n+1}}(a_{n+1})$, $g_{n+1} (b_{n+1}) = v_{x_{n+1}}(b_{n+1})$ by \eqref{dualineq}, \eqref{dualeq}, by convexity of $v_{x_{n+1}}(y)$ we see that 
\begin{align}\label{aa}
g_{n+1}(y) \le v_{x_{n+1}}(y) \le L_{n+1}(y) \quad \forall y \in [a_{n+1}, b_{n+1}].
\end{align}
This establishes the claim in \eqref{nnegative} for $n+1$.\\
For the second claim, fix $i \in \{1,2,\ldots,n+1\}$. Then there exists $y_i \in [a_{n+1}, b_{n+1}]$ such that $(x_i, y_i) \in G$. Then \eqref{aa} implies
\begin{align}\label{ab}
v_{x_{i}}(y_i) = g_{n+1}(y_i) \le L_{n+1} (y_i).
\end{align}
Also note that by \eqref{dualineq}, \eqref{dualeq}, 
\begin{align}\label{ac}
v_{x_{i}}(a_{n+1}) \ge v_{x_{n+1}}(a_{n+1}) = g_{n+1}(a_{n+1}), \quad v_{x_{i}}(b_{n+1}) \ge v_{x_{n+1}}(b_{n+1}) = g_{n+1}(b_{n+1}).
\end{align}
Now by \eqref{ab}, \eqref{ac} and convexity of $v_{x_{i}}(y)$, we deduce that
\begin{align}\label{ad}
v_{x_{i}}(y) \ge L_{n+1}(y) \quad \forall y \in [A, a_{n+1}] \cup [b_{n+1}, B].
\end{align}
As \eqref{ad} holds for every $i$, this verifies the claim \eqref{npositive} for $n+1$, completing the inductive step.\smallskip\\
\noindent{\it Case 2 : $l_n \le  a_{n+1} \le r_n \le b_{n+1}$.}\\
First, since $v_{x_{n+1}}(a_{n+1})=g_{n+1}(a_{n+1}) \le g_{n}(a_{n+1})$, by the induction hypothesis
\begin{align}\label{ae}
v_{x_{n+1}}(a_{n+1})  \le L_n(a_{n+1}).
\end{align}
Also note that by \eqref{endpoints}, we have
\begin{align}\label{af}
v_{x_{n+1}}(r_n) \ge g_n(r_n)=L_n(r_n).
\end{align}
Now the convexity of $v_{x_{n+1}}$ implies 
\begin{align}\label{ag}
v_{x_{n+1}}(b_{n+1}) \ge L_n(b_{n+1}).
\end{align}
Note that $L_n(l_n)=g_n(l_n) = g_{n+1}(l_n)$ and $g_{n+1}(b_{n+1}) = v_{x_{n+1}}(b_{n+1})$. Hence by \eqref{ag},
\begin{align}\label{ah}
 \lambda g_{n+1} (l_n) +  (1-\lambda) g_{n+1} (b_{n+1}) \ge  \lambda L_n (l_n) +  (1-\lambda) L_n (b_{n+1})         \quad \forall \lambda \in [0,1].
\end{align}
As $g_{n+1} (y) = \min \big{(}g_n(y), v_{x_{n+1}}(y)\big{)}$, the induction hypothesis and convexity of $v_{x_{n+1}}(y)$ along with \eqref{ae}, \eqref{ah} imply the first claim \eqref{nnegative} for $n+1$.\\
For the second claim, fix $i \in \{1,2,\ldots,n+1\}$. Then there exists $y_i \in [l_n, b_{n+1}]$ such that $(x_i, y_i) \in G$, and the first claim  \eqref{nnegative} for $n+1$ gives $v_{x_{i}}(y_i) \le  L_{n+1} (y_i)$. On the other hand, $v_{x_{i}}(l_n) \ge g_n(l_n)=g_{n+1}(l_n)$ and $v_{x_{i}}(b_{n+1}) \ge v_{x_{n+1}}(b_{n+1}) = g_{n+1}(b_{n+1})$. Hence by convexity of $v_{x_{i}}(y)$, we deduce that
\begin{align}\label{ai}
v_{x_{i}}(y) \ge L_{n+1}(y) \quad \forall y \in [A, l_n] \cup [b_{n+1}, B].
\end{align}
As \eqref{ai} holds for every $i$, this verifies the claim \eqref{npositive} for $n+1$, completing the induction. The case $a_{n+1} \le l_n \le  b_{n+1} \le r_n$ can be treated in the same way.\smallskip\\
{\bf Step 3.} We claim that
\begin{align}\label{uniformbound}
\text{$L_n(y)$'s are uniformly bounded on $[a,b]$ for all $n$.}
 \end{align}
To prove \eqref{uniformbound}, choose $M>0$ such that $|v_{x_{1}}(y)| \le M$ on $[a,b]$. Then $L_n(l_n)=g_n(l_n) \le v_{x_{1}}(l_n) \le M$ and $L_n(r_n)=g_n(r_n) \le v_{x_{1}}(r_n) \le M$. Hence, as $L$ is linear, $L_n(a_1) \le M$ and $L_n(b_1) \le M$. On the other hand, by Step 2, $-M \le v_{x_{1}}(a_1) =g_n(a_1) \le L_n(a_1)$ and $-M \le v_{x_{1}}(b_1) =g_n(b_1) \le L_n(b_1)$. This implies  \eqref{uniformbound}. In particular, there exists a subsequence of $L_n$ (which we denote as $L_k$) such that $L_k(y)$ uniformly converges to an affine function as $k \to \infty$, say $L (y)$ on every compact interval in $\R$. Now we claim that, for $v_x(y) :=  c(x,y) + f(x) + h(x)\cdot(y-x)$ and $g(y) := \displaystyle\inf_{x \in X_G}\big{(}v_x(y)\big{)}$,
\begin{align}
\label{lnegative}&g(y) \le L(y) \text{ on } ]a,b[,\\
\label{lpositive}&g(y) \ge L(y) \text{ on } [A, a] \cup [b, B].
\end{align}
First it is easy to see \eqref{lnegative} as follows: if $y \in ]a,b[$ then for all large $k$ we have $y \in ]l_k, r_k[$, thus by Step 2, $g(y) \le g_k(y) \le L_k(y)$. By taking $k \to \infty$, we see that $g(y) \le L(y)$, proving \eqref{lnegative}.

Next, suppose that there exists  $(x,y) \in G$ with $a<y<b$. Then again for all large $k$ we have $v_x(y) = g(y) \le g_k(y) \le L_k(y)$, thus $v_x(y) \le L(y)$. On the other hand, by \eqref{dualineq}, \eqref{dualeq} we have $v_x(l_k) \ge g_k(l_k)$ and $v_x(r_k) \ge g_k(r_k)$, thus $v_x(a) \ge L(a)$ and $v_x(b) \ge L(b)$ by letting $k \to \infty$. By convexity of $v_x$, this implies that $v_x(y) \ge L(y) \text{ on } [A, a] \cup [b, B]$.

If there is no $y$ such that $a<y<b$ and  $(x,y) \in G$, this means that $G_x = \{a,b\}$, i.e.\ $(x,a), (x,b) \in G$. Then without loss of generality we may simply include this $x$ in the sequence $\{x_n\}$ defined in Step 1, say we put  $x=x_1$. This implies that $l_n = a$ and $r_n=b$ for all $n$. Thus $v_x(a) = L(a)$ and $v_x(b) = L(b)$. By convexity of $v_x$, this implies that $v_x(y) \ge L(y) \text{ on } [A, a] \cup [b, B]$. Hence, for any $x\in X_G$ we deduce that $v_x(y) \ge L(y) \text{ on } [A, a] \cup [b, B]$, therefore \eqref{lpositive} follows.

If $\nu(a)>0$ then there exists $x_a \in I$ such that $(x_a, a) \in G$. Then again we may include $x_a$ in the sequence $\{x_n\}$ defined in Step 1. This implies that $l_n = a$ for all large $n$, thus $g(a) = v_{x_a}(a) = L(a)$. Similarly if $\nu(b) >0$  then $g(b)  = L(b)$.

Finally, we see that  $\big{(}\tilde f (x), \tilde g(y), \tilde h(x)\big{)} := \big{(}f(x) - L(x), g(y) - L(y), h(x) - \nabla L\big{)}$ satisfies \eqref{dualineq}, \eqref{dualeq}, \eqref{negative}, \eqref{positive} and \eqref{zero}, concluding the proof.
\end{proof}

Next, we deal with the half-infinite domain case.

\begin{proposition}\label{halfinfinitedomain}
Let $\mu\preceq\nu$ be irreducible on a half-infinite  domain $I=]0, \infty[$ and assume that $(f,g,h)$ is a dual maximizer  in Theorem \ref{dualexist}. Suppose that there exists $A \in [-\infty, 0]$ such that for $\mu$-a.e.\ $x$,
\begin{align*}
&\text{$y \mapsto c(x,y)$ is continuous and convex on $[A, \infty[$, and }\\
&\text{there exists an affine function $L_x$ such that } c(x,y)  \le L_x(y) \text{ for all } y\ge 0.
\end{align*}
Then  we can find an affine function $L$ such that $\big{(}\tilde f (x), \tilde g(y), \tilde h(x)\big{)} := \big{(}f(x) - L(x), g(y) - L(y), h(x) - \nabla L\big{)}$ is a dual maximizer, and furthermore
\begin{align}
\label{negative1}&\tilde g(y) \le 0 \text{ on } ]0,\infty[,\\
\label{positive1}&\tilde g(y) \ge 0 \text{ on } [A, 0],\\
\label{zero1}&\text{if $\nu(0) >0$ then $\tilde g(0)=0$.}
\end{align}

\end{proposition}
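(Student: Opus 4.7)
The plan is to follow the three-step scheme that proves Proposition \ref{finitedomain}, with the only substantive change appearing in Step 3: uniform boundedness of the secant lines $L_n$, which in the bounded case was extracted from $v_{x_1}$ being bounded on a compact interval, must now be obtained from the affine upper bound $c(x,y) \le L_x(y)$ for $y \ge 0$. Reproducing Step 1, I would pass to a regular contact set $G$ with $\P^*(G)=1$ for every primal optimizer and $Y_G \subset [0, \infty[$, then choose $\{x_n\} \subset X_G$ with $I(x_n) = ]a_n, b_n[$ so that $l_n := \min_{i \le n} a_i \searrow 0$, $r_n := \max_{i \le n} b_i \nearrow \infty$, and $J_n \cap I(x_{n+1}) \neq \emptyset$; irreducibility of $(\mu,\nu)$ on $]0,\infty[$ supplies such a sequence. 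I then define $v_{x_n}, g_n, L_n$ via \eqref{defofv}--\eqref{defofL}, with $L_n$ the secant of $g_n$ through $l_n$ and $r_n$.

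The induction of Step 2 transfers essentially verbatim and yields
\begin{align*}
 g_n \le L_n \text{ on } [l_n, r_n], \qquad g_n \ge L_n \text{ on } [A, l_n],
\end{align*}
with the right-tail statement $g_n \ge L_n$ on $[r_n, \infty[$ also remaining true but becoming vacuous in the limit as $r_n \to \infty$; the case analysis relies only on convexity of each $v_{x_i}$ on $[A, \infty[$ (which we have) and the relative position of $[l_n, r_n]$ versus $[a_{n+1}, b_{n+1}]$. The principal obstacle is establishing that $L_n(y) = s_n y + t_n$ has uniformly bounded coefficients. Fixing $x_1$ (chosen so the affine bound holds) and using that $(x_1, a_1), (x_1, b_1) \in G$ forces $g_n(a_1) = v_{x_1}(a_1)$ and $g_n(b_1) = v_{x_1}(b_1)$, I combine: (i) $L_n(a_1) \ge v_{x_1}(a_1) =: -M_1$ and $L_n(b_1) \ge v_{x_1}(b_1)$, from the first part of Step 2 and $a_1, b_1 \in [l_n, r_n]$; (ii) $L_n(l_n) = g_n(l_n) \le v_{x_1}(l_n) \le C_1 := \sup_{[0, b_1]} v_{x_1} < \infty$, by continuity of $v_{x_1}$ on $[A, b_1]$; (iii) $L_n(r_n) = g_n(r_n) \le v_{x_1}(r_n) \le \alpha r_n + \beta$ from the affine upper bound on $c(x_1, \cdot)$. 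Subtracting $s_n a_1 + t_n \ge -M_1$ from $s_n r_n + t_n \le \alpha r_n + \beta$ forces $s_n \le (\alpha r_n + \beta + M_1)/(r_n - a_1) \to \alpha$ as $r_n \to \infty$, yielding an upper bound on $s_n$; the same lower inequality combined with $s_n l_n + t_n \le C_1$ gives $s_n(a_1 - l_n) \ge -(M_1 + C_1)$, a uniform lower bound on $s_n$ for large $n$ since $l_n \to 0 < a_1$; then $t_n$ is bounded from $s_n a_1 + t_n \ge -M_1$.

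With $(s_n, t_n)$ uniformly bounded, a subsequence $L_k$ converges to an affine $L$ uniformly on compacts, and the remainder of Step 3 of Proposition \ref{finitedomain} transfers unchanged: $g \le L$ on $]0, \infty[$ follows from $g(y) \le g_k(y) \le L_k(y)$ for all large $k$, while $g \ge L$ on $[A, 0]$ follows from the convexity argument applied to each $v_x$ with $x \in X_G$ using $v_x \ge L_k$ at $l_k$ and, eventually, at $r_k$. For \eqref{zero1}, when $\nu(0) > 0$ some $x_0 \in X_G$ has $(x_0, 0) \in G$; inserting $x_0$ into $\{x_n\}$ forces $l_n = 0$ for all large $n$, so $L(0) = v_{x_0}(0) = g(0)$, giving $\tilde g(0)=0$. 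Setting $(\tilde f, \tilde g, \tilde h) := (f - L, g - L, h - \nabla L)$ preserves the dual maximizer property and yields \eqref{negative1}--\eqref{zero1}.
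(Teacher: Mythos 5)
Your overall plan is reasonable, and the uniform-bound argument for the secant coefficients $(s_n, t_n)$ is a clean way to use the affine majorant assumption. However, there is a genuine gap: your construction requires $r_n < \infty$ for every $n$ so that the secant $L_n$ through $(l_n, g_n(l_n))$ and $(r_n, g_n(r_n))$ is well defined, but this fails in general. Since $I = ]0,\infty[$, the conditional support $G_x$ of a primal optimizer may be unbounded for some $x \in X_G$ --- for instance take $\mu = \delta_1$ and $\nu$ a probability on $[0,\infty[$ with mean $1$ and unbounded support, in which case $G_1$ is unbounded and $b_1 = +\infty$ already. Then $r_n = +\infty$ and $L_n$ is undefined; there is also no obvious way to avoid including such an $x$ in the sequence, since irreducibility does not guarantee that the right end can be reached by $x$'s with bounded $G_x$. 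The remark the paper makes inside Step 2 (``argue with arbitrarily large $c_{n+1}$ satisfying $(x_{n+1},c_{n+1}) \in G$'') patches the \emph{induction} but cannot rescue the definition of the secant itself.

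The paper's proof sidesteps this by abandoning the secant lines entirely in the half-infinite case. After a single affine normalization chosen so that $v_{x_1}$ is nonincreasing on $[A,\infty[$ with $v_{x_1}(0)=0$ --- which only needs the asymptotic slope of $v_{x_1}$ to be finite, supplied by the affine majorant hypothesis --- the induction shows that each $g_n$ is nonincreasing on $[A,l_n]$ and satisfies $g_n \le g_n(l_n)$ on $[l_n,\infty[$. The key quantity is the scalar sequence $g_n(l_n)$, which is shown to be increasing and bounded above by $0$, hence convergent to some finite $L$; this constant (not a nontrivial affine function) plays the role of your limit $L$. Because this argument never needs to evaluate anything at $r_n$, it handles $b_{n+1}=+\infty$ by simply substituting a large finite $c_{n+1} \in G_{x_{n+1}}$. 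To salvage your approach you would need to replace $r_n$ by a growing sequence of finite points $\tilde r_n \in \bigcup_{i\le n} G_{x_i}$ and redo the Step~2 induction with these, but then the interval matching in the case analysis no longer lines up with $[a_{n+1},b_{n+1}]$, so the transfer is not ``essentially verbatim'' as claimed; you would effectively be reconstructing the paper's monotonicity argument by a more roundabout route.
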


\begin{proof} 
{\bf Step 1.}  Let $G \subset \R \times \R$ be chosen as in the Step 1 in Proposition \ref{finitedomain}, i.e. $Y_G \subset [0,\infty[$, $G$ is regular, $\P^*(G)=1$ for every solution $\P^*$ to \eqref{MGTP}, $y \mapsto c(x,y)$ is convex and bounded above by an affine function on $[0,\infty[$ for every $x \in X_G$, and \eqref{dualineq}, \eqref{dualeq} holds. For each $x \in X_G$, let $I(x) = {\rm int(conv} (G_x))$ as before. We can find a sequence $\{x_n\} \subset X_G$ such that each $I(x_n)$ is an open interval (and we write $I(x_n)=]a_n, b_n[$, where $b_n$ can be $+\infty$), and if we define $l_n = \min_{i \le n} a_i$, $r_n = \max_{i \le n} b_i$ and also define $J_n = ]l_n,r_n[$, then
\begin{align}
&J_n \cap I(x_{n+1}) \neq \emptyset  \quad \forall n, \quad\text{and}\\
&l_n \searrow 0, r_n \nearrow +\infty \quad \text{as} \quad  n \to \infty.
\end{align}
\smallskip\\
{\bf Step 2.} Recall definitions \eqref{defofv} -- \eqref{defofL}. Altering   the triple $(f,g,h)$ by an appropriate affine function and using the condition of linear growth and convexity satisfied by the cost, we can assume that 
\begin{align}\label{asymptote}
\text{$v_{x_{1}}(y)$ is decreasing on $[A, \infty[$, $ v(0)=0$ and $\lim_{y \to \infty} v_{x_{1}}(y) = b > -\infty$. }
\end{align}
Now we claim that, for each $n$,
\begin{align}\label{horizon}
g_n(y) \text{ is decreasing on } [A, l_n], \text{ and } g_n(y) \le g_n(l_n) \text{ on } [l_n, \infty[.
\end{align}
Note that the claim \eqref{horizon} is obviously true for $n=1$ by the assumption \eqref{asymptote}. Suppose the claim is true for $n$. We will show that the claim is also true for $n+1$. To see this, note that as $b_{n+1} \ge l_n$, using \eqref{dualineq}, \eqref{dualeq} and the induction hypothesis \eqref{horizon}, we see that
\begin{align}\label{ba}
v_{x_{n+1}}(b_{n+1}) \le g_n (b_{n+1}) \le g_n(l_n), \text{ while } v_{x_{n+1}}(l_n) \ge g_n(l_n).
\end{align}
(If $b_{n+1} = \infty$, then instead of $b_{n+1}$ we may argue with arbitrarily large $c_{n+1}$ satisfying $(x_{n+1}, c_{n+1}) \in G$.)
By \eqref{ba} and convexity of $v_{x_{n+1}}(y)$, we see that $v_{x_{n+1}}(y)$ is decreasing on $[A, l_n]$. As $g_{n+1}(y) = \min \big{(}g_n(y), v_{x_{n+1}}(y) \big{)}$ and  $g_n(y)$ is decreasing on $[A, l_n]$, we see that 
\begin{align}\label{}
g_{n+1}(y) \text{ is decreasing on } [A, l_n].
\end{align}
In particular, for any $y \in [l_{n+1}, l_n]$ we have $g_{n+1}(l_{n+1}) \ge g_{n+1}(y)$. For $y \ge l_n$, we see that $g_{n+1}(l_{n+1}) \ge g_{n+1}(l_{n}) = g_{n}(l_{n})$ by \eqref{endpoints}, and $g_{n}(l_{n}) \ge g_{n}(y) \ge g_{n+1}(y)$ by \eqref{horizon}. Hence
\begin{align}\label{}
g_{n+1}(y) \le g_{n+1}(l_{n+1}) \text{ on } [l_{n+1}, \infty[.
\end{align}
Therefore, \eqref{horizon} is proved for all $n$.\\
We have observed that the sequence $\{g_{n}(l_{n})\}$ is increasing. Note that $g_{n}(l_{n}) \le v_{x_{1}}(l_{n}) \le v_{x_{1}}(0)= 0$ for all $n$, thus $\{g_{n}(l_{n})\}$ converges to, say, $L$. Then we claim
\begin{align}
\label{lnegative1}&g(y) \ge L \text{ on } [A,0],\\
\label{lpositive2}&g(y) \le L \text{ on } ]0, +\infty[,
\end{align}
where, as before, $v_x(y) := c(x,y) + f(x) + h(x)\cdot(y-x)$  and $g(y) := \displaystyle\inf_{x \in X_G}\big{(}v_x(y)\big{)}$. To see this, fix $x>0$. Then there exists $n$ such that $l_n < x$. Arguing as above, we see that $v_x(y)$ is decreasing on $[A, l_n]$ and $v_x(l_n) \ge g_n(l_n)$ for all $n$. Hence for any $y \le 0$ we see that $v_x(y) \ge g_n(l_n)$. Letting  $n \to \infty$ we conclude
\begin{align}\label{}
g(y) \ge L \quad \text{for all} \,\,y \in [A,0].
\end{align}
Now for $y >0$, there exists $n$ such that $l_n < y$. Then by \eqref{horizon}, we see that $g(y) \le g_n(y) \le g_n(l_n)$ for all large $n$, thus by taking $n \to \infty$ we conclude
\begin{align}\label{}
g(y) \le L \quad \text{for all} \,\,y > 0.
\end{align}
If $\nu(0) >0$ then there is $x \in X_G$ with $(x,0) \in G$, and we may simply put this $x$ into the sequence $\{x_n\}$ by letting $x=x_1$. Then every $l_n$ simply becomes $0$ and $\{g_n(l_n)\}$ becomes the constant  sequence $L$. Hence, $g(0)=L$. Finally, altering the triple $(f,g,h)$ by the constant function $-L$, we can assume that $L=0$. This proves the proposition.
\end{proof}

We are now ready to show the existence of dual optimizers for martingale optimal transport problem in Theorem \ref{MainTheorem}. In particular we no longer assume the irreducibility of $(\mu, \nu)$.
Note that If $(\mu, \nu)$ is irreducible on the domain $I = \R$ then Theorem \ref{MainTheorem} simply follows from Theorem \ref{dualexist}. Otherwise, $(\mu, \nu)$ can be decomposed into at most countably many irreducible components, and any martingale $\P \in \rm{MT} (\mu, \nu)$ is decomposed accordingly. More precisely we recall:
%The following proposition is taken from \cite{BeNuTo16}. % and its proof is given in \cite{BeJu16}.
\begin{proposition} \cite[Theorem A.4]{BeJu16}\label{pr:decomp}
Let $\mu\preceq\nu$ and let $(I_{k})_{k \ge 1}$ be the open connected components of the set $\{x \,:\, u_{\mu}(x)<u_{\nu}(x)\}$. Set $I_{0}=\R\setminus \cup_{k\geq1} I_{k}$ and $\mu_{k}=\mu|_{I_{k}}$ for $k\geq 0$, so that $\mu=\sum_{k\geq0} \mu_{k}$.
  Then, there exists a unique decomposition $\nu=\sum_{k\geq0} \nu_{k}$ such that
  $$
    \mu_{0} = \nu_{0} \quad\quad \mbox{and}\quad\quad \mu_{k} \preceq \nu_{k} \quad \mbox{for all} \quad k\geq1,
  $$
  and this decomposition satisfies $I_{k}=\{x \,:\, u_{\mu_{k}}(x) < u_{\nu_{k}}(x)\}$ for all $k\geq1$.
  Moreover, any $\P\in \rm{MT}(\mu,\nu)$ admits a unique decomposition
  $
    \P=\sum_{k\geq0} \P_{k}
  $
  such that $\P_{k}\in \rm{MT}(\mu_{k},\nu_{k})$ for all $k\geq0$.
\end{proposition}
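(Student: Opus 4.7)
The plan is to exploit potential functions and a separation argument. Recall that $u_\mu,u_\nu$ are continuous Lipschitz-$1$ convex functions with $u_\mu\le u_\nu$, which characterizes $\mu\preceq\nu$ since the first moments of $\mu,\nu$ coincide. The set $\{u_\mu<u_\nu\}$ is therefore open, and decomposes into at most countably many disjoint open intervals $(I_k)_{k\ge 1}$, whose closed complement is $I_0$.

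The heart of the argument is a \emph{separation principle}: for any $\P\in\M(\mu,\nu)$ and any $x_0\in\R$, one has the identity
\[
u_\nu(x_0)-u_\mu(x_0) \;=\; \E^\P\bigl[\,\E^\P[\,|Y-x_0|\mid X\,]-|X-x_0|\,\bigr].
\]
The inner integrand is pointwise non-negative by Jensen's inequality combined with the martingale identity $\E^\P[Y-x_0\mid X]=X-x_0$, and it vanishes exactly when the conditional law of $Y$ given $X$ is supported on the same side of $x_0$ as $X$. Hence whenever $u_\mu(x_0)=u_\nu(x_0)$, the point $x_0$ separates $X$ and $Y$ under $\P$ in the sense that $\sgn(Y-x_0)=\sgn(X-x_0)$ almost surely; and moreover if $X=x_0$ then the conditional law of $Y$ necessarily collapses to $\delta_{x_0}$ (since its mean is $x_0$ and its support is one-sided).

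Applying this at the endpoints of each component $I_k=(a_k,b_k)$ -- which lie in $I_0$ by definition -- the $\mu$-mass starting in $I_k$ is transported by any $\P$ only within $\overline{I_k}$, while $\mu$-mass at any point of $I_0$ stays put. This suggests defining $\P_k:=\P\lsub{\{X\in I_k\}}$, $\mu_k:=\mu|_{I_k}$, and $\nu_k:=(Y)_*\P_k$ for $k\ge 0$. Then $\P=\sum_{k\ge 0}\P_k$ with $\P_k\in\M(\mu_k,\nu_k)$ so that $\mu_k\preceq\nu_k$, and the collapse observation forces $\nu_0=\mu_0$. The identity $I_k=\{u_{\mu_k}<u_{\nu_k}\}$ follows because $\nu_k$ is supported in $\overline{I_k}$ and the potential difference $u_{\nu_k}-u_{\mu_k}$ agrees with $u_\nu-u_\mu$ on $\overline{I_k}$ and vanishes off it. Uniqueness of the $\nu$-decomposition is then obtained from the distributional identity $(u_{\nu_k})''=2\nu_k$, which shows that $\nu_k$ is already determined by $u_\nu-u_\mu$, independently of the choice of $\P$.

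The main obstacle I anticipate is treating points of $I_0$ that are endpoints of some component (or shared endpoints of two abutting components, which necessarily lie in $I_0$). At such a boundary point $x_0$, the $\mu$-mass is frozen by the collapse argument and flows into $\nu_0=\mu_0$, whereas the $\nu$-mass sitting at $x_0$ may legitimately arrive via transport originating inside one (or two) adjacent components. Verifying that this latter mass splits coherently between the neighbouring $\nu_k$'s, independently of the choice of $\P$, is the delicate part of the proof and is precisely where the potential characterization above pays off.
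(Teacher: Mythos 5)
The paper does not prove this proposition; it imports it verbatim as \cite[Theorem A.4]{BeJu16}, so there is no in-paper argument to compare against. Your sketch reconstructs what is essentially the Beiglb\"ock--Juillet potential-function proof, and the core mechanism is right: the identity $u_\nu(x_0)-u_\mu(x_0)=\E^\P\bigl[\E^\P[|Y-x_0|\,|\,X]-|X-x_0|\bigr]$, conditional Jensen, the one-sidedness and collapse consequences at any $x_0\in I_0$, the definitions $\P_k=\P\!\restriction_{\{X\in I_k\}}$, $\nu_k=(Y)_*\P_k$, and the recovery of $\nu_k$ from $\tfrac12(u_{\nu_k})''$. The support claim $\nu_k(\overline{I_k}^c)=0$ does follow from applying the separation at $a_k,b_k\in I_0$, and the identity $u_{\nu_k}-u_{\mu_k}=(u_\nu-u_\mu)\mathbf{1}_{\overline{I_k}}$ (interpreted as the continuous function agreeing with $u_\nu-u_\mu$ on $\overline{I_k}$ and vanishing elsewhere) then pins down $\nu_k$.

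One point to tighten. Your uniqueness argument, as written, only establishes that the decomposition does not depend on the choice of $\P$; but the proposition asserts uniqueness among \emph{all} decompositions $\nu=\sum\nu_k$ with $\nu_0=\mu_0$ and $\mu_k\preceq\nu_k$, and for an abstract competitor you do not know a priori that $\nu_k$ is carried by $\overline{I_k}$. To close this, set $w_k:=u_{\nu_k}-u_{\mu_k}\ge 0$; then $\sum_k w_k=u_\nu-u_\mu$ forces $w_k\equiv 0$ on $I_0$ (non-negative functions summing to zero), and on any other component $I_j$, $j\ne k$, the function $w_k$ is convex (since $w_k''=2\nu_k\ge 0$ there, $\mu_k$ having no mass on $I_j$), non-negative, and vanishes at the endpoints of $I_j$ (or tends to $0$ at $\pm\infty$ for an unbounded $I_j$), hence $w_k\equiv 0$ on $I_j$. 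This gives $w_k\equiv 0$ off $I_k$, whence $\nu_k$ is supported on $\overline{I_k}$ and the potential identity applies, closing the uniqueness. A second, more cosmetic point: turning ``for every fixed $x_0\in I_0$, $\mu$-a.e.\ $x$'' into ``for $\mu$-a.e.\ $x\in I_0$, $\P_x=\delta_x$'' requires selecting a countable set $D\subset I_0$ (e.g.\ $\Q\cap I_0$, the component endpoints, and the $\mu$-atoms in $I_0$) and observing that for $x\in I_0$ off a $\mu$-null set there is either a sequence in $D$ decreasing to $x$ or one increasing to $x$, either of which already forces $\P_x=\delta_x$ once combined with $\E[Y\,|\,X=x]=x$. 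You flag the abutting-endpoint subtlety correctly; the resolution is just that such points are countable and hence either $\mu$-atoms (already in $D$) or $\mu$-null.
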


Note that as  $\mu_{0} = \nu_{0}$, $\P_0$ must be the identity martingale. We can now give the proof of our  first main result. 

\begin{proof}[Proof of Theorem \ref{MainTheorem}] 
Notice that by definition of the dual maximizer and the assumption on the cost, we can assume that $y \mapsto c(x,y)$ is continuous and convex on $J:={\rm conv}({\rm supp} (\nu))$. Let $\P^*$ be any minimizer in $\rm{MT} (\mu, \nu)$ for the problem \eqref{MGTP}. Then $\P^*_k$ is a minimizer in $\rm{MT} (\mu_k, \nu_k)$. For each $k \ge 1$, choose a set $G_k \subset \R \times \R$ and a triple $(f_k, g_k, h_k)$ provided by Proposition \ref{finitedomain} if $I_k$ is bounded, or by Proposition \ref{halfinfinitedomain} if $I_k$ is half-infinite. We need to define $G_0$ and $(f_0, g_0, h_0)$ for $I_0$. As $\P^*_0$ is the identity map, of course we take $G_0 := \{(x,x) \,:\, x \in I_0\}$. For each $x \in I_0$ define $f_0(x) = -c(x,x)$, and choose $h_0(x)$ in such a way that the convex function $v_x(y) := c(x,y) +f_0(x) + h_0(x) \cdot (y-x)$ satisfies $v(x) = v ' (x) = 0$ (more precisely $0$ belongs to the subdifferential of $v_x$ at $x$). Define $g_0 (y) = \inf_{x \in I_0}\{ v_x (y)\}$ so in particular $g_0 = 0$ on $I_0$. Finally, define
\begin{align}
f(x) &= f_k(x) \quad \text{if} \quad x \in X_{G_k},\\
h(x) &= h_k(x) \quad \text{if} \quad x \in X_{G_k},\\
g(y) &= \inf_{k \ge 0} g_k (y).
\end{align}
Let $G = \cup_{k \ge 0} G_k$. Obviously $\P^*(G)=1$. Now observe that the properties \eqref{negative}, \eqref{positive}, \eqref{zero}, \eqref{negative1}, \eqref{positive1}, \eqref{zero1} verified in Proposition \ref{finitedomain}, \ref{halfinfinitedomain} clearly indicate that the triples $(f_k, g_k, h_k)_{k \ge 0}$ are compatible, that is, the duality \eqref{dualineq}, \eqref{dualeq} holds for $G$ and $(f,g,h)$. This completes the proof.
\end{proof}

\begin{remark} The linear growth assumption of the function $y \mapsto c(x,y)$ is required only for those $x$ in the half-infinite irreducible domain of $\mu, \nu$ as in Proposition \ref{halfinfinitedomain}. 
\end{remark}
%Finally, we give the proof of Theorem \ref{regularity}.
\begin{proof}[Proof of Theorem \ref{regularity}] 
 We will say that a function $f$ is $L$-Lipschitz on $D$ if $|f(x)-f(y)| \le L|x-y|$ for all $x,y \in D$. Assume $c$ is $L_1$-Lipschitz on $J\times J$ and $u$ is $L_2$-Lipschitz on $J$. 

Consider the decomposition of $(\mu,\nu)$ into irreducible pairs given by Proposition \ref{pr:decomp}. Fix $i \ge 1$ and consider $(\mu_i,\nu_i)$ which is irreducible on the bounded domain $I_i=]a_i,b_i[$. Let $\tilde c(x,y) := c(x,y) + u(y)$, and let $(f_i,g_i,h_i)$ be a dual maximizer for the cost $\tilde c$ and $(\mu_i,\nu_i)$, satisfying the conclusion of Proposition \ref{finitedomain}.  In the Step 1 of the proof of Proposition \ref{finitedomain} we explained that there exists a regular set $G_i \subset I_i \times \overline I_i$ on which every (decomposed) solution $\P^*_i$ to the problem \eqref{MGTP} is concentrated, so that the duality \eqref{dualineq1}, \eqref{dualeq1} holds with $(f_i,g_i,h_i)$, $\tilde c$, and $G_i$.

Define $v_{i,x}(y) := f_i(x)+h_i(x)\cdot(y-x) + \tilde c(x,y)$  and note that as $y \mapsto \tilde c(x,y)$ is $L$-Lipschitz and convex on $J= {\rm conv}(\supp \nu) = [A,B]$ where $L= L_1+L_2$, we have
\begin{align}\label{derivativedifference}
\frac{d v_{i,x}}{dy}(b_i^-)  -\frac{d v_{i,x}}{dy}(a_i^+)  \le 2L.
\end{align}
Proposition \ref{finitedomain} tells us that $g_i(a_i) \ge 0$, $g_i(b_i) \ge 0$ while there is $y \in [a_i,b_i]$ such that $g_i(y) \le 0$, since $g_i(y) \le 0$ whenever $(x,y) \in G_i$ for some $x$. As $g_i(y) := \inf_{x \in X_{G_i}} \{v_{i,x}(y)\}$, if $(x,y) \in G_i$ then we have $v_{i,x}(a_i) \ge 0$, $v_{i,x}(b_i) \ge 0$ while $v_{i,x}(y) \le 0$. With \eqref{derivativedifference} this implies that $v_{i,x}$ is $2L$-Lipschitz for any $x\in X_{G_i}$, hence $g_i$ is also   $2L$-Lipschitz on $[a_i,b_i]$. Proposition \ref{finitedomain} then tells us that we can  replace $g_i$ with $\tilde g_i:=g_i \wedge 0$, so that $\tilde g_i$ is $2L$-Lipschitz on $\R$, $\tilde g_i \le 0$ on $[a_i,b_i]$,  $\tilde g_i=0$ on $\R \setminus ]a_i,b_i[$, and $(f_i, \tilde g_i, h_i)$ satistfies \eqref{dualineq1} -- \eqref{dualeq1}.

In the proof of Theorem \ref{MainTheorem} we showed that there is a dual maximizer $(f,g,h)$ to the problem \eqref{MGTP} where $ g:= \inf_{i \ge 0} \tilde g_i$ (recall that $\tilde g_0 \equiv 0$ on $I_0$). Hence we get that $ g$ is $2L$-Lipschitz on $\R$, $ g\le 0$ on $J$, and  $ g = 0 $ on $J^c$.

Next, observing the duality relation
\begin{align}
\label{dualineq2}  g(y)  - \tilde c(x,y)  \le f(x) + h(x) \cdot (y-x)  \quad  \forall x\in J, \forall y \in J,
\end{align}
note that we can replace $f$, $h$ by $\tilde f$, $\tilde h$ respectively, as follows: define $H:J\times J \to \R$ by the upper concave envelope in $y$ variable
$$H(x,y) := {\rm conc}[g(\cdot) - \tilde c(x, \cdot)](y).$$
Then we define $\tilde f(x) := H(x,x)$ and $\tilde h(x) := \frac{\partial H(x,y)}{\partial y} \big{|}_{y=x}$. More precisely, $\tilde h(x)$ is an element of the superdifferential of the concave function $y \to H(x,y)$ at $x$, and there exists a measurable choice of such an $\tilde h$. Now in view of \eqref{dualineq2}, it is clear that $(\tilde f,  g, \tilde h)$ is a dual maximizer. Observe that since $y \mapsto g(y) - \tilde c(x,y)$ is $3L$-Lipschitz, it is immediate that $|\tilde h| \le 3L$ on $J$. Then, to see that $\tilde f$ is Lipschitz, note that since $x \mapsto g(y) - \tilde c(x,y)$ is $L_1$-Lipschitz, by definition of $H$ we have
\begin{align*}
| H(x,y) - H(x',y)| \le L_1|x-x'| \quad \forall x, x', y \in J.
\end{align*}
On the other hand, since the concave envelope of a Lipschitz function is Lipschitz,
\begin{align*}
| H(x,y) - H(x,y')| \le 3L|y-y'| \quad \forall x, y, y' \in J.
\end{align*}
These inequalities immediately  imply that, for any $x,x' \in J$,
\begin{align*}
|\tilde f (x) - \tilde f(x')| = | H(x,x) - H(x',x')| \le (L_1+3L)|x-x'|.
\end{align*}
Finally, recall that $ \tilde c(x,y) = c(x,y) + u(y)$ where $u$ is $L_2$-Lipschitz, we replace $g$ with $\tilde g = g-u$ which is $2L+L_2$ Lipschitz on $J$. Then $(\tilde f, \tilde g, \tilde h)$ is a dual maximizer satisfying the conclusion of Theorem \ref{regularity}.
\end{proof}

\section{Some insightful examples}\label{sec:examples}
\begin{example}\label{linear} In this example, we show that the linear growth condition on the cost for the half-infinite domain case cannot be dropped in Theorem \ref{MainTheorem}. \\
Let $x_n = n$, $n=1,2,3,\ldots$ and let $G_{n} = \{n-1, n+1\}$. For each $n$, choose $c(n,y)$, $f(n)$ and $h(n)$ appropriately such that  $v_n(y) =  c(n,y) + f(n) + h(n)\cdot(y-n)$ becomes
\begin{align}\label{}
&v_n(y) = y^2 \quad\quad\quad\,\,\,\, \text{ if } y \ge n-1,\\
&v_n(y) = (n-1)y \quad  \text{ if } y \le n-1.
\end{align}
Then the triple $(f,g,h)$ supports the set $G = \{(n, n+1), (n, n-1) \,:\, n \in \N\}$  in view of \eqref{dualineq}, \eqref{dualeq}, and clearly 
\begin{align}\label{}
g(y) = -\infty \quad \text{on} \quad  ]-\infty, 0[.
\end{align}
Hence, Proposition \ref{halfinfinitedomain} cannot hold in this case.

\end{example}

\begin{example} \label{ex:toolittleconvexity}
In this example, we show that if the convexity assumption on $y \mapsto c(x,y)$ holds only locally around $x$, then the dual maximizer can fail to exist.

Define the cost function by
\begin{align}
c(x_\infty, y) = 0, \quad\text{and} \quad c(x_n,y) = 
    \begin{cases}
    0 \quad \text{if} \quad y \in [y_{n-1}, y_n], \\
    y - y_{n-1} \quad \text{if} \quad y \le y_{n-1},\\
    -y + y_n \quad \text{if} \quad y \ge y_n.
    \end{cases}
\end{align}
Let $y_0=0$, $y_n=\sum_{k=1}^n \frac{1}{n^2}$, $x_n = (y_{n-1} + y_n) / 2$, and $x_\infty = y_\infty = \sum_{k=1}^\infty \frac{1}{n^2}$. Let $\mu$ be any probability measure whose support is $\{x_n\}_{1 \le n \le \infty}$, and construct a martingale measure $\P$ whose disintegration $(\P_x)_x$ w.r.t. $\mu$ is as follows:
$$\P_{x_n} = \frac{1}{2}(\delta_{y_{n-1}} + \delta_{y_n}) \quad \forall n=1,2,\ldots\quad\text{and}\quad \P_{x_\infty} = \delta_{x_\infty},$$
and define $\nu$ as the second marginal of $\P$. Notice that then $\P$ is the unique element in ${\rm MT}(\mu, \nu)$.  Now we will show that this (optimal) $\P$ does not allow a dual maximizer, that is, there does not exist a triple $(f,g,h)$ which satisfies the following:
\begin{align}
\label{dualineq3}g(y) &\le c(x_n,y) + f(x_n) + h(x_n) \cdot (y-x_n) \quad \forall n \in \N \cup \{\infty\}, \,\,\forall y \in \R, \\
\label{dualeq3+}g(y_n) &= c(x_n,y_n) + f(x_n) + h(x_n) \cdot (y_n -x_n) \quad  \forall n \ge 1,\\
\label{dualeq3-} g(y_{n-1}) &= c(x_n,y_{n-1}) + f(x_n) + h(x_n) \cdot (y_{n-1} -x_n)  \quad \forall n \ge 1,\\
\label{dualeq30} g(y_\infty) &= c(x_\infty,y_\infty) + f(x_\infty) + h(x_\infty) \cdot (y_\infty-x_\infty) = f(x_\infty).
\end{align}
Recall that once such a $(f,g,h)$ exists, then we can redefine $g$ as follows:
\begin{align}
\label{newg}g(y) := \inf_{n \in \N \cup \{\infty\}} \big{(} c(x_n,y) + f(x_n) + h(x_n) \cdot (y-x_n) \big{)}.
\end{align}
We claim that, if we have such a $(f,g,h)$, then we must have
$$g(y_\infty) = -\infty,$$
 which is a contradiction to \eqref{dualeq30}. To see this, for convenience let us define 
 \begin{align}\label{v_x}
v_x(y) := c(x,y) + f(x) + h(x) \cdot (y-x).
\end{align}
 Then  by \eqref{dualineq3}, \eqref{dualeq3+}, \eqref{dualeq3-} we must have
 \begin{align}\label{v_xproperty1}
 v_{x_n} (y_n) &= v_{x_{n+1}} (y_n), \quad\text{and}\\
\label{v_xproperty2} v_{x_n} (y_{n-1}) &\le v_{x_{n+1}} (y_{n-1}),  \quad \forall n \ge 1.
\end{align}
Notice that these with the definition of $c(x_n, y)$ immediately implies 
\begin{align}\label{slopedecrease}
h(x_n) \ge h(x_{n+1}) +1, \quad \forall n \ge 1.
\end{align}
Also notice that  $g(y)$ is a piecewise linear function on $[0, y_\infty[$, and in fact $g(y) = f(x_n) + h(x_n) \cdot (y-x_n)$ on $[y_{n-1}, y_n]$. Hence by \eqref{slopedecrease} and the fact $\sum_n \frac{1}{n} = \infty$ and the concavity of $g$, we see that 
\begin{align*}%\label{slopedecrease}
 g(y_\infty) = \lim_{y \nearrow y_\infty} g (y) = - \infty,
\end{align*}
a contradiction to the fact that $g(y_\infty)$ must be real-valued.
\end{example}

\begin{example}\label{ex:C^r} In this example, we show that the $C^2$ regularity required in Theorem \ref{MainTheorem} is optimal in the following sense: for any $1<r<2$, we construct a cost function $c \in C^r$ and compactly supported marginals $\mu\preceq\nu$ for which the dual attainment fails. This example shall be a slight modification of the previous one. First, let 
\begin{align}c(x,y) = -|x-y|^r,
\end{align}
and choose $s$ such that 
\begin{align}s>1 \quad \text{and} \quad sr <2.
\end{align}
Let $y_0=0$, $y_n=\sum_{k=1}^n \frac{1}{n^s}$, $x_n = (y_{n-1} + y_n) / 2$, and $x_\infty = y_\infty = \sum_{k=1}^\infty \frac{1}{n^s}$. Define a martingale measure $\P$ and its marginals $\mu, \nu$ as in Example \ref{ex:toolittleconvexity}.  Note that $\mu, \nu$ are compactly supported since $s>1$. Again we will show that this (optimal) $\P$ does not allow a dual maximizer, that is, there does not exist a triple $(f,g,h)$ which satisfies \eqref{dualineq3}, \eqref{dualeq3+}, \eqref{dualeq3-}, \eqref{dualeq30}, where $g$ is given as in \eqref{newg}. Again we will show that $g(y_\infty) = -\infty$, which is a contradiction to \eqref{dualeq3-}. To see this, again define $v_x$ as in \eqref{v_x} so that we have \eqref{v_xproperty1}, \eqref{v_xproperty2}. Next, let us consider the slope
\begin{align}
b_n = \frac{v_{x_n} (y_n) - v_{x_n} (y_{n-1})}{y_n - y_{n-1}}.
\end{align}
In order to estimate $b_n$, we will first estimate $b_n - b_{n+1}$. For this, as we can modify the $(f,g,h)$ by an affine function, we can assume that $f(x_{n+1}) = h(x_{n+1})=0$, thus without loss of generality we can assume that $b_{n+1} = 0$. Now notice that, because of  \eqref{dualineq3}, \eqref{dualeq3+}, \eqref{dualeq3-}, we must have the following inequality:

%Recall that $y_n - y_{n-1} = n^{-s}$. Then we compute
\begin{align*}
b_n - b_{n+1} &\ge \frac{v_{x_{n+1}} (y_n) - v_{x_{n+1}} (y_{n-1})}{y_n - y_{n-1}}\\
&=n^s[-|y_n - x_{n+1}|^r + |y_{n-1} - x_{n+1} |^r]\\
&=n^s\bigg{[}-\bigg{(}\frac{1}{2(n+1)^s}\bigg{)}^r + \bigg{(}   \frac{1}{2(n+1)^s} + \frac{1}{n^s}       \bigg{)}^r \bigg{]}\\
&=n^s\bigg{[}\frac{(2(n+1)^s +n^s)^r - n^{sr}}{2n^{sr}(n+1)^{sr}} \bigg{]}\\
& \approx Cn^s \cdot n^{sr} \cdot n^{-2sr} = Cn^{s-sr}. \quad\text{Hence we deduce that}         
\end{align*}
\begin{align*}
-b_n = \sum_{k=0}^{n-1} (b_k - b_{k+1})\gtrapprox Cn^{1+s-sr}. \quad \text{This implies that, since $sr <2$,}
 \end{align*}
\begin{align*}
(y_n - y_{n-1})b_n \lessapprox -Cn^{1-sr} \quad \Longrightarrow \quad \sum_{n=1}^\infty (y_n - y_{n-1})b_n = -\infty.
\end{align*}
Again as in Example \ref{ex:toolittleconvexity}, this tells us that $g(y_\infty) = -\infty$, a contradiction to \eqref{dualeq30}.
\end{example}
\begin{example}\label{nonintegrability}
In this example we show the necessity of semiconvexity for the regularity in Theorem \ref{regularity} and the Lipschitzness of $c$  alone is not sufficient, by constructing a $1$-Lipschitz cost $c$ and a compactly supported, irreducible pair $(\mu,\nu)$ for which $(f,g,h)$ is a dual maximizer, but $g \notin L^1(\nu)$. 

To do this, we take $c(x,y) = -|x-y|$, and let $I=]0,1[$ and  $\mu= {\rm Leb}\big{|}_{[0,1]}$. Choose a smooth and strictly concave function $\xi : I \to \R_-$ such that $\xi \le 0$, $\xi(\frac{1}{2})=0$, $\lim_{x \to 0^+} \xi(x) = \lim_{x \to 1^-} \xi(x) = -\infty$, and $\int_0^1 \xi(x) \mu(dx) = -\infty$. Now we will construct a probability measure $\nu$ where $\nu(I)=1$ and $(\mu,\nu)$ are irreducible, and also find a dual maximizer $(f,g,h)$ where $g = \xi$. Then $\int g(x) \nu(dx) \le \int g(x) \mu(dx) = -\infty$, as claimed.

To construct such $\nu$ and $(f,g,h)$,  observe that for each $x \in I$ there exist unique $f(x), h(x)$ such that the function $v_x(y) :=f(x)+h(x)\cdot(y-x) -|x-y|$ satisfies 
\begin{enumerate}
\item $v_x(y) \ge \xi(y) \quad \forall x \in I, \forall y \in I$, and
\item for each $x \in I$, $v_x$ is tangent to $\xi$ at two points, say $y^-(x), y^+(x)$.
\end{enumerate}
Note that then $y^-, y^+$ are well-defined on $I$, and $0<y^-(x) < x < y^+(x)<1$. Define a probability measure $\P_x := \frac{y^+(x)-x}{y^+(x)-y^-(x)}\delta_{y^-(x)} + \frac{x-y^-(x)}{y^+(x)-y^-(x)}\delta_{y^+(x)}$, and $\P \in P(\R^2)$ by $\P(dx,dy) = \P_x(dy) \cdot \mu(dx)$, i.e. $(\P_x)_x$ is a disintegration of $\P$ with respect to $\mu$. Define $\nu$ as the second marginal of $\P$ and note that by definition of $\P$, $(\mu, \nu)$ are irreducible and are concentrated on $I$. Now observe that the definition of $f,h$ gives us that $g(y) := \inf_{x \in I} \{v_x(y)\}$ satisfies $g = \xi$ so that $\int g(x) \nu(dx)= -\infty$, and $(f,g,h)$ is a dual maximizer with respect to $\mu, \nu$ and $c$.
\end{example}

\bibliography{joint_biblio}{}
\bibliographystyle{plain}
\end{document}